\documentclass[12pt]{article}
\usepackage{amsmath,amssymb,amsthm,amsfonts,mathrsfs}
\usepackage{appendix}
\usepackage{enumitem,graphics,xcolor,yfonts,colonequals}
\usepackage{stmaryrd}
\usepackage[alphabetic]{amsrefs}
\usepackage[all,cmtip]{xy}
\usepackage{tikz-cd}
\usepackage[colorlinks,anchorcolor=blue,citecolor=blue,linkcolor=blue,urlcolor=blue,bookmarksopenlevel=1]{hyperref}
\urlstyle{rm}

\usepackage[margin=1in]{geometry}

\usepackage[]{cleveref}

\usepackage{enumitem}

\usepackage{csquotes}

\usepackage{fancyhdr}

\pagestyle{fancy}

\fancyhf{}
\chead{\scriptsize\MakeUppercase\rightmark}
\cfoot{\footnotesize\thepage}

\fancypagestyle{titlepage}
{
	\fancyhf{}

	\fancyfoot[l]{
	\href{https://mathscinet.ams.org/mathscinet/msc/msc2020.html}
		{\emph{2020 Mathematics Subject Classification}}
		14H50, 14H51, 52B99\\
		\emph{Keywords}: plane quartic, Scorza correspondence, heptagon, adjoint hypersurface
	}
}

\AtBeginDocument{%
	\def\MR#1{}
}

\newtheorem*{theorem*}{Theorem}
\newtheorem*{proposition*}{Proposition}
\newtheorem*{corollary*}{Corollary}
\newtheorem{theorem}{Theorem}[section]
\newtheorem{proposition}[theorem]{Proposition}
\newtheorem{corollary}[theorem]{Corollary}
\newtheorem{lemma}[theorem]{Lemma}

\numberwithin{equation}{section}

\theoremstyle{definition}

\newtheorem{example}[theorem]{Example}

\newtheorem{remark}[theorem]{Remark}

\DeclareMathOperator{\adj}{adj}
\DeclareMathOperator{\pr}{pr}

\renewcommand{\P}{\mathbb{P}}
\newcommand{\etasym}{\eta_{\rm sym}}
\newcommand{\C}{\mathbb{C}}
\newcommand{\ol}{\overline}

\newcommand{\sO}{\mathcal{O}}

\begin{document}
\title{Plane quartics and heptagons}
\author{Daniele Agostini, Daniel Plaumann, Rainer Sinn, Jannik Lennart Wesner}
\date{\today}

\maketitle
\thispagestyle{titlepage}

\begin{abstract}
Every polygon with $n$ vertices in the complex projective plane is naturally associated with its adjoint curve of degree $n-3$. Hence the adjoint of a heptagon is a plane quartic. We prove that a general plane quartic is the adjoint of exactly 864 distinct complex heptagons. This number had been numerically computed by Kohn et al. We use intersection theory and the Scorza correspondence for quartics to show that $864$ is an upper bound, complemented by a lower bound obtained through explicit analysis of the famous Klein quartic.
\end{abstract}

\section{Introduction}
Given a convex $n$-gon in the real plane bounded by $n$ lines in general position, there is a unique curve of degree $n-3$ passing through the $\binom{n}{2}-n$ intersection points of the lines that are not vertices of the polygon, called its \textit{adjoint curve}. 
\begin{figure}[h]
	\begin{center}
	\includegraphics*[scale=0.18]{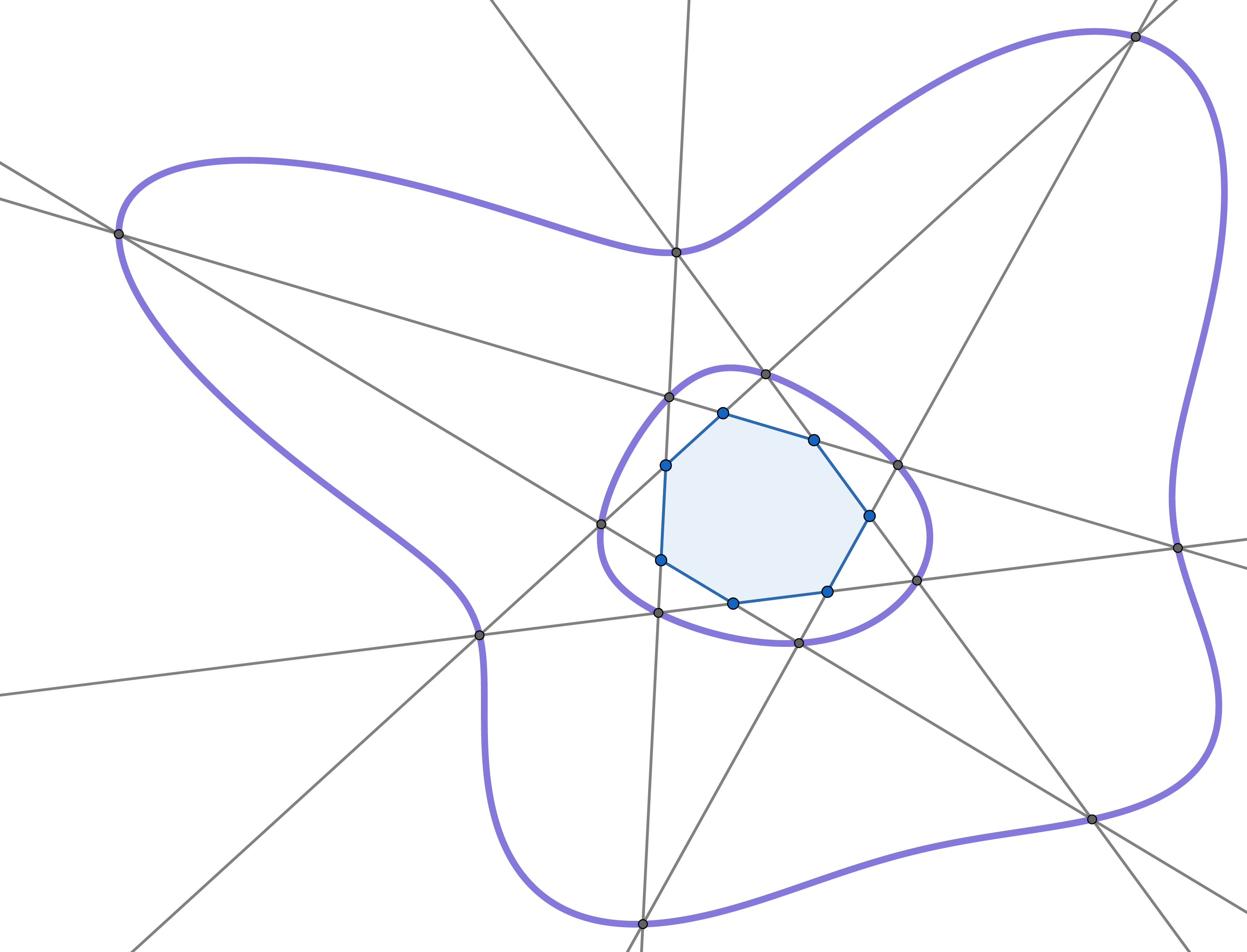}
	\caption{Seven lines with the seven vertices of the heptagon marked in blue; the other 14 pairwise intersection points of the lines are the residual points on the purple adjoint quartic curve.}
	\label{basic7gon}
	\end{center}
\end{figure}

The adjoint plays a role in generalizations of barycentric coordinates from triangles to polygons ($n>3$). These generalized barycentric coordinates in terms of rational functions appear in work of Wachspress \cite{zbMATH06505530} and can be further generalized to higher dimensions, see \cite{zbMATH00989549} and \cite{KRzbMATH07286075}. Adjoints are also the corner stone of recent developments in physics: the adjoint is the numerator of the \emph{canonical form} for polytopes viewed as positive geometries. Polytopes are the fundamental examples of positive geometries which arose in the context of scattering amplitudes in particle physics \cites{zbMATH06854036,polypols}. 

Our focus here is on $n$-gons in the plane. The definition of the adjoint extends naturally to tuples of lines in the complex projective plane. For $3\leq n \leq 6$, there are infinitely many polygons with the same adjoint curve. For $n>7$, a general curve of degree $n-3$ is not adjoint to any $n$-gon. The special case $n=7$ is the only case where there is a finite number of polygons adjoint to a general curve of degree $4$ (see \cite{polypols}*{Theorem 4.2}). The goal of our paper is to compute the cardinality of the generic fibre of the map that associates a $7$-gon in the plane with its adjoint quartic plane curve. 

A $7$-gon or heptagon is an ordered tuple of seven lines $(L_1,\ldots,L_7)$ in the complex projective plane with the property that no three lines meet at a point. The adjoint quartic is the unique curve of degree $4$ containing all \emph{residual points} which are the $14$ intersection points of lines $L_i\cap L_j$ with non-consecutive indices (modulo $7$). A numerical computation in \cite{polypols} suggests that the generic number of such heptagons with the same adjoint should be equal to $864$, up to the action of the dihedral group on the labels of the lines. We prove this numerical result rigorously using enumerative results on curves. The total number $864$ is in fact $864 = 24 \times 36$ -- there are 24 heptagons for each of the $36$ even theta characteristics of the given plane quartic. In particular, the Scorza correspondence will play a crucial role. Our proof avoids issues of excess intersection by providing the upper bound of $864$ using intersection theory on products of curves. The lower bound is based on an explicit analysis of the fibre of the Klein quartic. To show that this fibre is reduced and contains exactly $864$ heptagons, we use the automorphsim group to construct all points in the fibre.

The paper is organized as follows. In \Cref{sec:adjoints}, we set up notation and show that every heptagon defines an adjoint quartic curve together with an even theta characteristic. We also give an outline of the proof in terms of upper and lower bounds in \Cref{lemma:twoconditions}. \Cref{sec:scorza} relates heptagons to the Scorza correspondence. We exploit this connection in \Cref{sec:intersection} to compute the upper bound using intersection theory and the class of the Scorza correspondence. Finally, \Cref{sec:kleinquartic} studies the Klein quartic in detail from the point of view of heptagons. We determine its fibre completely, using symbolic computation.

\section{Adjoints of heptagons}\label{sec:adjoints}

Consider a heptagon $P$ in the projective plane $\mathbb{P}^2$, that is to say seven distinct labelled lines $L_1,\dots,L_7$ such that no three of them meet in the same point. We denote the $\binom{7}{2} = 21$ intersection points as $\{p_{ij}\} = L_i\cap L_j$. From these points, we remove $p_{12},p_{23},p_{34},p_{45},p_{56},p_{67},p_{17}$, the intersection points of consecutive lines (modulo $7$), which we call \textit{vertices}. (Like the vertices of convex heptagon in the real plane with consecutively labelled sides; see \Cref{basic7gon}.) We are left with the $14$ points 
\[ \mathcal{R}(P) = \{p_{13},p_{14},p_{15},p_{16},p_{24},p_{25},p_{26},p_{27},p_{35},p_{36},p_{37},p_{46},p_{47},p_{57} \}. \]
This set is called the \emph{residual arrangement} of the heptagon, cf.~\cite{KRzbMATH07286075} for a modern discussion and \cite{zbMATH06505530} for history.
There exists a unique smooth quartic curve $C$ passing through the residual arrangement, called the adjoint quartic of the heptagon. This defines a rational map from the set of seven ordered lines in $\mathbb{P}^2$ to plane quartics
\begin{equation}\label{eq:mapquartic}
\operatorname{adj}\colon |\mathcal{O}_{\mathbb{P}^2}(H)|^{7} = |\mathcal{O}_{\mathbb{P}^2}(H)| \times \dots \times |\mathcal{O}_{\mathbb{P}^2}(H)| \dashrightarrow |\mathcal{O}_{\mathbb{P}^2}(4H)| 
\end{equation}
where we denote by $H$ the class of a line in $\mathbb{P}^2$. These two spaces have the same dimension and numerical experiments in \cite{polypols} suggest that this map should be generically finite of degree $14\times 864$, where the $14$ comes from the symmetries of the dihedral group $D_7$ acting on the labels of the lines. Concretely, as a subgroup of $S_7$, the dihedral group $D_7$ with $14$ elements is generated by a rotation $(1\ 2\ 3\ 4\ 5\ 6\ 7)$ and a reflection $(2\ 7)(3\ 6) (4\ 5)$.
We prove this conjecture using intersection theory on products of (quartic) curves.

\subsection{Heptagons and theta characteristics}
We first discuss how such a configuration of seven lines determines not only an adjoint quartic but also an even theta characteristic on that quartic. Recall that a theta characteristic of a smooth curve $C$ is a divisor class $\eta$ such that $2\eta \sim K_C$ is the canonical class. It is \textit{even} if the dimension $h^0(C,\eta)$ of the space of global sections is even and it is \emph{odd} otherwise. The difference of two theta characteristics is a $2$-torsion point in the Jacobian $\operatorname{Pic}^0(C)$ of $C$. Since smooth plane quartics are canonical curves, a theta characteristic on such a curve $C\subset \P^2$ is a divisor $\eta$ of degree~$2$ such that $2\eta \sim H$ corresponds to the class of the intersection of $C$ with a line.

\begin{lemma}\label{lem:thetachar}
  Let $P$ be a heptagon with smooth adjoint quartic $C\subset \P^2$. 
  The divisor $R=\sum_{p\in \mathcal{R}(P)} p$ of degree $14$ on $C$ is linearly equivalent to $3H + \eta$, where $\eta$ is a theta characteristic of $C$ with $h^0(C,\eta)=0$. In particular, $\eta$ is even.
\end{lemma}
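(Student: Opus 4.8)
The plan is to separate the statement into two parts: first that $\eta := R - 3H$ is a theta characteristic, and then that it has no sections (whence it is even). Throughout I will use that $C$ is a smooth plane quartic, so $C$ has genus $3$ and $K_C \sim H$.

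For the first part, I would analyze how each individual line meets $C$. Since $C$ is a quartic, $L_i \cap C$ is an effective divisor of degree $4$ linearly equivalent to $H$. Now $L_i$ contains exactly six of the points $p_{ij}$ (for $j \neq i$), of which two are vertices, namely the consecutive intersections $p_{i-1,i}$ and $p_{i,i+1}$, and the remaining four are residual. As $C$ passes through every residual point, the four residual points lying on $L_i$ already exhaust the intersection $L_i \cap C$; in particular no vertex can lie on $C$, for otherwise $L_i$ would meet $C$ in at least five points and be a component of the irreducible curve $C$. Hence $L_i \cap C = \sum_{j} p_{ij} \sim H$, the sum running over the four indices $j$ non-consecutive to $i$. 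Summing over $i = 1,\dots,7$ counts each residual point exactly twice, since $p_{ij}$ lies on precisely the two lines $L_i$ and $L_j$. Therefore $2R \sim 7H$, so $2(R - 3H) \sim H \sim K_C$, and $\eta = R - 3H$ is a theta characteristic of degree $2$.

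For the second part I would show that $\eta$ is not effective. Because $2\eta \sim H$, one has $4H - R \sim H - \eta \sim \eta$, so it suffices to prove $h^0(C, 4H - R) = 0$. Suppose instead that $4H - R$ were effective, say $4H - R \sim E$ with $E \geq 0$ of degree $2$. Then $R + E$ is an effective divisor of degree $16$ with $R + E \sim \mathcal{O}_C(4)$. Since a smooth plane quartic is projectively normal, the restriction map $H^0(\mathbb{P}^2, \mathcal{O}(4)) \to H^0(C, \mathcal{O}_C(4))$ is surjective, so $R + E = Q \cap C$ for some quartic curve $Q$; moreover $Q \neq C$, since a section cutting out the nonzero divisor $R + E$ cannot be a multiple of the equation of $C$. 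Thus $Q$ is a quartic distinct from $C$ passing through all $14$ residual points.

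The last step is to contradict uniqueness of the adjoint, and this is where I expect the real difficulty to lie. The definition only furnishes a unique \emph{smooth} quartic through the residual arrangement, so I must upgrade this to uniqueness among \emph{all} quartics, equivalently that the $14$ residual points impose independent conditions on $|\mathcal{O}_{\mathbb{P}^2}(4)|$. The natural argument is to consider the pencil spanned by $Q$ and $C$: every member passes through the $14$ residual points, and since the base locus is the finite intersection $C \cap Q$ with reduced residual points, a general member is smooth by Bertini. This would produce infinitely many smooth quartics through the residual arrangement, contradicting uniqueness of the smooth adjoint. Hence no such $Q$ exists, $4H - R$ is not effective, $h^0(C, \eta) = 0$, and $\eta$ is even. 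The crux of the whole proof is thus this final upgrade to independent conditions; the linear-equivalence bookkeeping in the first two parts is essentially formal once the key observation $L_i \cap C = \sum_j p_{ij}$ is in place.
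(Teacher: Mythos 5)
Your proof is correct in substance, and for the harder half of the statement it takes a genuinely different route from the paper. The first part is essentially the paper's argument in per-line form: you show $L_i.C$ is exactly the sum of the four residual points on $L_i$, and summing gives $2R\sim 7H$, whereas the paper phrases the same count in aggregate ($C$ meets $\sum_i L_i$ in $28$ points, with multiplicity exactly $2$ at each of the $14$ residual points); both are fine. The difference is in proving $h^0(C,\eta)=0$. The paper stays on the curve: subtracting $C.L_1$, $C.L_7$, $C.L_2$ from $R$ yields the explicit representative $\eta\sim p_{35}+p_{36}+p_{46}-p_{27}$; since $p_{35},p_{36}$ span $L_3$ and $p_{46}\notin L_3$, the three points are not collinear, so $h^0(C,p_{35}+p_{36}+p_{46})=1$, and subtracting the distinct point $p_{27}$ kills the section. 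This also produces the relation \eqref{eq:scorzafirst}, which the paper reuses later in Lemma \ref{lemma:scorzaall}. Your argument instead parallels the paper's more general Proposition \ref{prop:thetacharpolypol}: effectivity of $\eta\sim 4H-R$ would, via surjectivity of $H^0(\P^2,\mathcal{O}_{\P^2}(4))\to H^0(C,\mathcal{O}_C(4H))$ (equivalently $H^1(\P^2,\mathcal{O}_{\P^2})=0$, the same cohomological input as the sequence \eqref{eq:idealsheafR}), produce a quartic $Q\neq C$ through all $14$ residual points. Where Proposition \ref{prop:thetacharpolypol} then invokes uniqueness of the adjoint among \emph{all} quartics, citing \cite{polypols}*{Proposition 2.2}, you assume only uniqueness among smooth quartics and upgrade it with a pencil argument. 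That upgrade is a nice self-contained alternative to the citation, at the cost of not producing the explicit representative of $\eta$ that the paper exploits later.

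One step needs tightening: \enquote{a general member is smooth by Bertini} is not quite right as stated, because Bertini only gives smoothness away from the base locus, and your parenthetical justification (\enquote{reduced residual points}) is neither verified ($E$ may share points with $R$, making $C\cap Q$ non-reduced there) nor the condition that is actually relevant. The repair is cheap and uses the one thing you know for free, namely that $C$ itself is a smooth member of the pencil. For instance: the set of $[\lambda:\mu]\in\P^1$ whose member is singular is closed (it is the image of the incidence variety $\{([\lambda:\mu],p): \nabla(\lambda F+\mu G)(p)=0\}$ under the proper projection to $\P^1$), and it is a proper subset since $C$ is smooth; hence it is finite. Alternatively: at each of the finitely many base points $p$ one has $\nabla F(p)\neq 0$, so at most one member of the pencil is singular at $p$, and combining this with Bertini off the base locus again leaves only finitely many singular members. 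Either way, infinitely many members of the pencil are smooth quartics through $\mathcal{R}(P)$, and your contradiction with uniqueness of the smooth adjoint goes through.
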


\begin{proof}
The adjoint quartic $C$ intersects the reducible curve $\sum_{i=1}^7 L_i$ at the 14 points of the residual arrangement $\mathcal{R}(P)$, with multiplicity at least $2$ at each point. Since $7\cdot 4 = 28$, these are all the intersection points and the multiplicity is exactly $2$. 
Hence, looking at the corresponding divisors on the curve $C$, we see that $2\cdot R \sim 7H$, or equivalently
\[ R \sim 3H + \eta \]
where $\eta$ is a theta characteristic on $C$. We now show that this is an even theta characteristic. Consider the two lines $L_1,L_7$ and the subset $\{p_{26},p_{24},p_{25},p_{46},p_{36},p_{35}\}$ of points in $\mathcal{R}(P)$ that are not contained in $L_1$ or $L_7$. Then we have
\begin{equation}\label{eq:ztheta}
 p_{26}+p_{24}+p_{25}+p_{46}+p_{36}+p_{35} \sim R - C.L_1 - C.L_7 \sim H+\eta 
\end{equation}
so that
\begin{equation}\label{eq:scorzafirst} 
p_{35}+p_{36}+p_{46}-p_{27} = p_{26}+p_{24}+p_{25}+p_{46}+p_{36} - C.L_2 \sim \eta. 
\end{equation}
Since the points $p_{35},p_{36},p_{46}$ are not collinear, we have that $h^0(C,p_{35}+p_{36}+p_{46})=1$, and since $p_{27}$ is distinct from all these points, we have that $h^0(C,\eta)=0$.
\end{proof}

We can give a different proof of a more general statement as follows. Recall from \cite{polypols} that a \emph{rational polypol} in the plane is a reducible curve $P=\Gamma_1+\Gamma_2+\dots+\Gamma_r$ of degree $d$ whose irreducible components $\Gamma_i$ are reduced rational curves  with a choice of points $p_{i,{i+1}}\in \Gamma_i\cap \Gamma_{i+1}$ (modulo $r$) that are nodal singularities of $P$. These points are called the vertices of the polypol. The set $\mathcal{R}(P)$ consisting of all the singularities of $P$, except the vertices, is the set of residual points. Assume now that all the singularities of $P$ are nodal, then it is proven in \cite{polypols}*{Proposition 2.2}, that there exists a unique plane curve $C$ of degree $d-3$ passing through the residual points of $P$. This is called the adjoint curve of the polypol. 

\begin{proposition}\label{prop:thetacharpolypol}
	Assume that the rational polypol $P$ is nodal with a smooth adjoint curve $C$. The divisor $R = \sum_{p\in \mathcal{R}(P)} p$ on $C$ is linearly equivalent to $3H+\eta$, where $\eta$ is a theta characteristic of $C$ with $h^0(C,\eta)=0$. In particular, it is even.
\end{proposition}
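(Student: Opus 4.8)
The plan is to mirror the two-step structure of \Cref{lem:thetachar} but to replace its explicit point-chasing (which uses that a smooth plane quartic is its own canonical model, and hence only works when $d=7$) by an argument that reads off $h^0(C,\eta)$ directly from the defining uniqueness of the adjoint, so that it works in every degree.

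\emph{Step 1: producing the theta characteristic.} First I would set $d=\deg P=\sum_i\deg\Gamma_i$ and count the nodes of $P$. Since $P$ is nodal with $r$ rational components, the arithmetic genus satisfies $p_a(P)=\delta-r+1$ where $\delta$ is the number of nodes; combined with $p_a(P)=\binom{d-1}{2}$ this gives $\delta=\binom{d-1}{2}+r-1$, so that $\mathcal R(P)$ consists of $\delta-r=\binom{d-1}{2}-1=\tfrac{d(d-3)}{2}$ points. The adjoint $C$ has degree $d-3$, so $C\cdot P$ has degree $d(d-3)$ by Bézout. As $C$ passes through each residual point and each such point is a node of $P$, the local intersection multiplicity there is at least $2$; since $2\cdot\tfrac{d(d-3)}{2}$ already exhausts the Bézout number, every residual point contributes multiplicity exactly $2$, the curve $C$ misses the vertices, and there are no other intersections. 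Reading this on $C$ gives $2R=C\cdot P\sim dH$, and since $K_C=(d-6)H|_C$ by adjunction, $\eta:=R-3H$ satisfies $2\eta\sim K_C$ and is a theta characteristic.

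\emph{Step 2: showing $\eta$ is even.} From $2R\sim dH$ I get $\eta\sim(d-3)H-R$, so it suffices to prove $h^0\big(C,(d-3)H-R\big)=0$. A global section of $\sO_C((d-3)H-R)$ is a section of $\sO_C((d-3)H)$ vanishing along the reduced divisor $R$, which lies in the smooth locus of $C$. Because $H^1(\P^2,\sO_{\P^2})=0$, restriction from degree-$(d-3)$ forms on $\P^2$ onto $H^0(C,(d-3)H)$ is surjective, so I can lift such a section to a form $G$ of degree $d-3$; as $R\subset C$, the form $G$ vanishes at every residual point and hence defines a plane curve of degree $d-3$ through $\mathcal R(P)$. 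By \cite{polypols}*{Proposition 2.2} the adjoint $C$ is the \emph{unique} such curve, forcing $G$ to be a scalar multiple of the equation of $C$; but that equation restricts to $0$ on $C$, so the section vanishes. Hence $h^0(C,\eta)=0$ and $\eta$ is even.

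\emph{Where the difficulty lies.} Step 1 is essentially bookkeeping, the only point needing care being that the multiplicity-$2$ computation must be checked for self-nodes of a single component as well as for crossings of two components; in both cases the local multiplicity is at least $2$, so the Bézout equality still pins it to exactly $2$. The conceptual heart is Step 2: for $d>7$ the curve $C\subset\P^2$ is no longer canonically embedded, so the heptagon argument built on ``three non-collinear points impose independent conditions on a quartic'' no longer applies. The device that rescues the general case is to dualize $\eta$ into the residual form $(d-3)H-R$ and extract its (non-)sections straight from the uniqueness of the adjoint, an input that is indifferent to the degree of $C$. The subtle step to get right is the lifting argument in the restriction sequence: one must ensure that ``vanishing on $R$'' is equivalent upstairs and downstairs, which holds precisely because any two lifts differ by a multiple of the equation of $C$ and that equation vanishes on $R\subset C$.
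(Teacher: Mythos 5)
Your proof is correct and takes essentially the same route as the paper's: Bézout's theorem together with the multiplicity-$2$ contribution at each node gives $2R\sim dH$, adjunction makes $\eta=R-3H$ a theta characteristic, and the uniqueness of the adjoint curve through $\mathcal{R}(P)$ forces $h^0(C,\eta)=0$. The only cosmetic differences are that you derive the count $|\mathcal{R}(P)|=\tfrac{d(d-3)}{2}$ from the arithmetic genus formula rather than citing \cite{polypols}*{Example 2.3}, and you phrase the vanishing of $h^0$ as an explicit lifting of sections (using $H^1(\P^2,\mathcal{O}_{\P^2})=0$) where the paper packages the identical computation into the ideal-sheaf sequence \eqref{eq:idealsheafR}.
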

\begin{proof}
	The curve $C$ intersects $P$ in all the points of $R$, with multiplicity at least $2$ at each point. However, the computation of \cite{polypols}*{Example 2.3} shows that $\mathcal{R}(P)$ consists of $\frac{d(d-3)}{2}$ points. Hence, by B\'ezout's Theorem, the curves $C$ and $P$ intersect only at the points of $R$, with multiplicity exactly $2$ at each point. This means that $2\cdot R \sim dH$, so that $2\cdot (R-3H) \sim (d-6)H \sim K_C$, where the last equivalence follows from the adjunction formula. 
	This proves that $\eta = R-3H$ is a theta characteristic. In particular, $\eta \sim K_C-\eta \sim (d-3)H-R$, and we have an exact sequence of sheaves on $\mathbb{P}^2$:
	\begin{equation}\label{eq:idealsheafR} 
	0 \longrightarrow \mathcal{O}_{\mathbb{P}^2} \longrightarrow \mathcal{I}_{R,\mathbb{P}^2}((d-3)H) \longrightarrow \mathcal{O}_{C}((d-3)H-R) \longrightarrow 0
	\end{equation}
	Taking the long exact sequence in cohomology and using that $H^1(\mathbb{P}^2,\mathcal{O}_{\mathbb{P}^2})=0$, we see that $h^0(C,\eta) = h^0(\mathbb{P}^2,\mathcal{I}_{R,\mathbb{P}^2}((d-3)H))-1$, but this is zero because $C$ is the unique curve of degree $d-3$ passing through the points of $R$.
\end{proof}

Going back to heptagons, Lemma \ref{lem:thetachar} tells us that the previous map ${\rm adj}$ in \eqref{eq:mapquartic} from the set of seven lines factors through a map ${\rm adj}_+$ to the space $|\mathcal{O}_{\mathbb{P}^2}(4H)|^{+}$ of pairs $(C,\eta)$, where $C$ is a smooth quartic curve and $\eta$ is an even theta characteristic on $C$:
\begin{equation}\label{eq:maptheta} 
\adj_+\colon |\mathcal{O}_{\mathbb{P}^2}(H)|^{7} \dashrightarrow |\mathcal{O}_{\mathbb{P}^2}(4H)|^{+}.
\end{equation}
Since each smooth curve of genus $3$ has exactly $36$ distinct even theta characteristics \cite{BLAbvar}*{Proposition 11.2.7}, the natural map $|\mathcal{O}_{\mathbb{P}^2}(4H)|^{+} \to |\mathcal{O}_{\mathbb{P}^2}(4H)|$, which forgets the theta characteristic, has degree $36$. Hence our original map ${\rm adj}$ in \eqref{eq:mapquartic} has degree $14\times 864$ if and only if the map ${\rm adj}_+$ in \eqref{eq:maptheta} has degree $14\times \frac{864}{36} = 14 \times 24 = 336$. We will prove this in two steps, according to the following result.
\begin{lemma}\label{lemma:twoconditions}
	Assume that the following two statements are true:
	\begin{enumerate}
	  \item[$(*)$] Any pair $(C,\eta)$ of a smooth plane quartic $C$ and an even theta characteristic $\eta$ arises from at most $336$ heptagons such that the differential of the map $\operatorname{adj}$ is nonsingular at each of these $336$ heptagons.
	  \item[$(**)$] There exists one pair $(C,\eta)$ of a smooth plane quartic $C$ and an even theta characteristic $\eta$ that has $336$ associated heptagons and such that the differential of the map $\operatorname{adj}$ is nonsingular at each of these $336$ heptagons.
	\end{enumerate}
    Then a general pair $(C,\eta)$ arises from precisely $14\times 24=336$ heptagons, while a general quartic curve $C$ arises from $14\times 864$ heptagons.  
\end{lemma}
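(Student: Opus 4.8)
The plan is to show that hypotheses $(*)$ and $(**)$ pin down the degree of the factored map $\adj_+$ from \eqref{eq:maptheta} exactly, and then to read off the two stated counts. First I would record a dimension count and a compatibility observation. Both the source $|\mathcal{O}_{\mathbb{P}^2}(H)|^{7}$ and the target $|\mathcal{O}_{\mathbb{P}^2}(4H)|^{+}$ have dimension $14$: indeed $\dim|\mathcal{O}_{\mathbb{P}^2}(4H)| = 14 = \dim|\mathcal{O}_{\mathbb{P}^2}(H)|^{7}$, and the forgetful map $|\mathcal{O}_{\mathbb{P}^2}(4H)|^{+}\to|\mathcal{O}_{\mathbb{P}^2}(4H)|$ is a $36$-sheeted étale cover, hence does not change the dimension. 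Because this cover is étale, its differential is everywhere an isomorphism, so $\operatorname{adj} = (\text{forget})\circ\adj_+$ has nonsingular differential at a heptagon $P$ exactly when $\adj_+$ does. This lets me transport the nonsingularity conditions in $(*)$ and $(**)$ verbatim from $\operatorname{adj}$ to $\adj_+$.

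Next, for the upper bound $\deg\adj_+\le 336$, I would first note that $(**)$ exhibits a heptagon at which the differential of $\adj_+$ is a linear map of full rank between spaces of equal dimension $14$, hence an isomorphism; therefore $\adj_+$ is dominant and, being a map of irreducible varieties of the same dimension, generically finite. Let $d$ denote the cardinality of its general fibre, taken away from the indeterminacy locus. Working over $\mathbb{C}$, generic smoothness guarantees that over a general pair $(C,\eta)$ each of the $d$ preimages is a point at which the differential is nonsingular. Condition $(*)$ bounds the number of such heptagons in any fibre by $336$, so $d\le 336$.

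For the lower bound $d\ge 336$, I would invoke $(**)$ together with the inverse function theorem. Over the distinguished pair $(C_0,\eta_0)$ there are $336$ heptagons $P_1,\dots,P_{336}$ at each of which the differential of $\adj_+$ is an isomorphism, so each $P_i$ has an open neighborhood carried isomorphically onto a neighborhood of $(C_0,\eta_0)$. Shrinking these to pairwise disjoint neighborhoods all mapping onto a common open set $V\ni(C_0,\eta_0)$ yields $336$ local sections of $\adj_+$ over $V$, whence every fibre over $V$, in particular a general one, contains at least $336$ distinct points. Combining the bounds gives $d=336$, i.e.\ a general pair $(C,\eta)$ arises from exactly $14\times 24 = 336$ heptagons. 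Since $\operatorname{adj}$ factors as $\adj_+$ followed by the degree-$36$ forgetful cover, its general fibre has $36\times 336 = 14\times 864$ points, so a general quartic $C$, carrying $36$ even theta characteristics, is the adjoint of $14\times 864$ heptagons.

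I expect the main obstacle to be the lower bound: a priori the $336$ heptagons witnessed by $(**)$ could collide onto fewer points, or be absorbed into a higher-dimensional excess component, in nearby fibres. The hypothesis that the differential is nonsingular at each of them is precisely what forbids this, forcing $\adj_+$ to be a local isomorphism there and ensuring that all $336$ points survive and remain distinct in the general fibre; this is the reason the nonsingularity clauses are built into both $(*)$ and $(**)$. A secondary point needing care is the appeal to generic smoothness in the upper bound, which is valid only because we are in characteristic zero.
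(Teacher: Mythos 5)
Your argument is correct, and it reaches the conclusion by a genuinely different route from the paper's. The paper stays entirely in the algebraic category: it identifies the locus $\mathcal{U}_{\text{sm}}$ of heptagons where the differential is nonsingular with the smooth locus of the morphism $\adj_+$ (using \cite{Hart}*{Proposition 10.4}, valid in characteristic zero), notes that $\adj_+$ restricted to $\mathcal{U}_{\text{sm}}$ is \'etale, and then invokes lower semicontinuity of the number of points in fibres of a non-proper \'etale morphism, citing \cite{EGAIV}*{Proposition 15.5.1}, to deduce from $(*)$ and $(**)$ that the set $\mathcal{V}$ of pairs whose fibre inside $\mathcal{U}_{\text{sm}}$ has exactly $336$ points is open and nonempty, hence dense; it finally removes the lower-dimensional set $\overline{\adj(\mathcal{U}\setminus\mathcal{U}_{\text{sm}})}$ so that on the resulting dense open set the $336$ smooth preimages exhaust the entire fibre. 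You instead package everything into the degree $d$ of the dominant, generically finite map $\adj_+$: your upper bound $d\le 336$ via generic smoothness plus $(*)$ is in substance the same mechanism the paper uses to force general fibres into $\mathcal{U}_{\text{sm}}$, while your lower bound $d\ge 336$ replaces the EGA semicontinuity citation by the complex-analytic inverse function theorem, producing $336$ disjoint local sections over an analytic neighbourhood of $(C_0,\eta_0)$. This buys a more self-contained argument over $\C$, at the cost of leaving the algebraic category: to close it you also need the standard fact that a nonempty analytic open subset of an irreducible complex variety meets every dense Zariski-open subset, so that your $V$ meets the locus where fibres have exactly $d$ points — say this explicitly.

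Two further points deserve a sentence each. First, both your dominance claim and the very meaning of \enquote{general pair} require $|\sO_{\P^2}(4H)|^{+}$ to be irreducible; you assert this but do not justify it, and the paper isolates precisely this issue in \Cref{rmk:irreducibility} (via symmetric determinantal representations). Second, in the last step the count $14\times 864$ for a general quartic $C$ needs all $36$ pairs $(C,\eta)$ lying over a general $C$ to be general pairs; this is true because the forgetful map is finite, so the image of the bad locus is a proper closed subset of the space of quartics, but it is not automatic from the factorization alone.
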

\begin{proof}
	First consider the set of heptagons $\mathcal{H}\subseteq |\mathcal{O}_{\mathbb{P}^2}(H)|^7$. We recall that this is the set of seven ordered lines such that no three of them meet in one point. This is open in $|\mathcal{O}_{\mathbb{P}^2}(H)|^7$. The adjoint map is well-defined on $\mathcal{H}$, so that we get a morphism $\operatorname{adj}\colon \mathcal{H} \to |\mathcal{O}_{\mathbb{P}^2}(4H)|$, where we just consider the adjoint and, for now, not a theta characteristic. Let now $\mathcal{U}$ be the preimage of the open subset of smooth quartics. This is again open, and for any heptagon in $\mathcal{U}$ we obtain an even theta characteristic, as explained in Lemma~\ref{lem:thetachar}. Thus we have a morphism $\operatorname{adj}\colon \mathcal{U} \to |\mathcal{O}_{\mathbb{P}^2}(4H)|^+$. The open subset $\mathcal{U}_{\text{sm}}$ where the map has nonsingular differental is exactly the locus where the map is smooth, because of \cite{Hart}*{Proposition 10.4}. In particular it is open and it is nonempy thanks to Assumption $(**)$. Since the restriction $\operatorname{adj}\colon \mathcal{U}_{\text{sm}} \to |\mathcal{O}_{\mathbb{P}^2}(4H)|^+$ is smooth of relative dimension $0$, it is \'etale, and Assumption $(*)$, together with \cite{EGAIV}*{Proposition 15.5.1} (see also \cite{MORomagny}), show that the locus $\mathcal{V}$ of pairs $(C,\eta)$ such that $\operatorname{adj}_{|\mathcal{U}_{sm}}^{-1}(C,\eta)$ consists of $336$ points is open. In turn, Assumption $(**)$ shows that it is nonempty, hence  $\mathcal{V}$ is open and dense in $|\mathcal{O}_{\mathbb{P}^2}(4H)|^+$. At this point, let $\mathcal{V}_o = \mathcal{V}\cap \overline{\operatorname{adj}(\mathcal{U}\setminus \mathcal{U}_{sm})}$: this is open and dense, since $\dim \overline{\operatorname{adj}(\mathcal{U}\setminus \mathcal{U}_{sm})} \leq \dim(\mathcal{U}\setminus \mathcal{U}_{sm}) < \dim \mathcal{U} = \dim \mathcal{V}$. The previous argument shows that any pair $(C,\eta)$ in $\mathcal{V}_o$ arises from precisely $336$ heptagons, counted with multiplicity, and the discussion before this lemma shows that any curve in $C$ arises from $14\times 864$ heptagons, counted with multiplicity. 
\end{proof}

We obtain the bound in $(*)$ in Section \ref{sec:scorza} via intersection theory. The example we give for Statement $(**)$ is the Klein quartic in Section \ref{sec:kleinquartic}.

\begin{remark}\label{rmk:irreducibility}
In the previous argument, we also need the space $|\mathcal{O}_{\mathbb{P}^2}(4H)|^{+}$ to be irreducible. It is well-known that an even theta characteristic on a smooth plane curve corresponds to a symmetric determinantal representation of the curve \cite{BeaDet}. Hence the space $|\mathcal{O}_{\mathbb{P}^2}(4H)|^{+}$ has a parametrization from an open subset of the space of symmetric matrices of linear ternary forms, which is clearly irreducible.
\end{remark} 

\section{Heptagons and Scorza correspondences}\label{sec:scorza} 

We will now consider the fibers of the map \eqref{eq:maptheta}: we want to reconstruct a heptagon from a smooth plane quartic and a theta characteristic. We will do so via the Scorza correspondence, inspired by a construction of Mukai in \cite{Mukai} to which we refer for more details. See also \cite{Dolgachev}*{Section 5.5}. Recall that if $C$ is a smooth plane quartic and $\eta$ is an even theta characteristic, the classical \emph{Scorza correspondence} is the subset of $C\times C$ defined as
\[ S(\eta) = \{ (x,y) \in C\times C \,|\, h^0(C,\eta+x-y)>0 \} \]
This is a divisor in $C\times C$. Since $h^0(C,\eta)=0$, it follows that $S(\eta)\cap \Delta = \emptyset$. Furthermore, by Riemann-Roch, $h^0(\eta+x-y)=h^0(\eta+y-x)$ so that the Scorza correspondence is symmetric with respect to exchanging the factors.

Suppose now that $(C,\eta)$ is adjoint to a heptagon $P=(L_1,\dots,L_7)$. We distinguish the residual points into ``inner'' and ``outer'' residual points:
\begin{align*}
\text{Inner  points: } & \qquad p_{13}, p_{24}, p_{35}, p_{46}, p_{57}, p_{16}, p_{27}. \\
\text{Outer  points: } & \qquad p_{14}, p_{25}, p_{36}, p_{47}, p_{15}, p_{26}, p_{37}.
\end{align*}
The inner residual points are the intersection points of lines whose indices are two apart (in cyclic order), and the outer residual points the intersection points of lines whose indices are three apart. The name is inspired by the real convex case shown in \Cref{basic7gon} where the inner residual points are the seven points on the inner oval of the (hyperbolic) adjoint quartic curve and the outer residual points the seven points on the outer oval. Here, the labels of the seven lines come from order of the edges of the convex heptagon in clockwise (or counterclockwise) order. 

Observe that the heptagon can be reconstructed from the set of labeled inner residual points. More precisely, let $\mathcal{H}\subseteq |\mathcal{O}_{\mathbb{P}^2}(H)|^7$ be the open set of heptagons (no three lines meeting in one point) and define the map
\begin{equation}\label{eq:matoinnerpoints} 
I \colon \mathcal{H} \longrightarrow (\mathbb{P}^2)^7, \quad (L_1,\dots,L_7) \mapsto (
p_{13},p_{46},p_{27},p_{35},p_{16},p_{24},p_{57})  
\end{equation}
that associates to a heptagon the set of labeled inner residual points. Then the inverse of this map is given by 
\begin{equation}\label{eq:maptoheptagons}
J\colon (\mathbb{P}^2)^7 \dashrightarrow |\mathcal{O}_{\mathbb{P}^2}(H)|^7, \quad (x_1,\dots,x_7) \mapsto (\ell_{15},\ell_{36},\ell_{14},\ell_{26},\ell_{47},\ell_{25},\ell_{37})
\end{equation}
that associates to a set of seven distinct inner residual points the corresponding heptagon, where we write $\ell_{ij}$ for the line spanned by $x_i$ and $x_j$. Therefore, the problem of counting heptagons can be translated into counting the configurations of inner residual points. To do so, we analyze these configurations as follows.

\begin{lemma}\label{lemma:scorzaall} 
	With the previous notation, we have the following equivalences of divisors:
	\begin{align*}
	\eta+p_{13}-p_{46} &\sim p_{47} + p_{57}, & \eta+p_{46}-p_{13} &\sim p_{37} + p_{27}, \\
	\eta+p_{46}-p_{27} &\sim p_{37} + p_{13}, & \eta+p_{27}-p_{46} &\sim p_{36} + p_{35}, \\
	\eta+p_{27}-p_{35} &\sim p_{36} + p_{46}, & \eta+p_{35}-p_{27} &\sim p_{26} + p_{16}, \\
	\eta+p_{35}-p_{16} &\sim p_{26} + p_{27}, & \eta+p_{16}-p_{35} &\sim p_{25} + p_{24}, \\
	\eta+p_{16}-p_{24} &\sim p_{25} + p_{35}, & \eta+p_{24}-p_{16} &\sim p_{15} + p_{57}, \\
	\eta+p_{24}-p_{57} &\sim p_{15} + p_{16}, & \eta+p_{57}-p_{24} &\sim p_{14} + p_{13}, \\
	\eta+p_{57}-p_{13} &\sim p_{14} + p_{24}, & \eta+p_{13}-p_{57} &\sim p_{47} + p_{46}. 
	\end{align*}
\end{lemma}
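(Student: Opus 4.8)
The plan is to derive all fourteen identities from the single relation \eqref{eq:scorzafirst}, $p_{35}+p_{36}+p_{46}-p_{27}\sim\eta$, together with the cyclic symmetry of the construction. The first observation I would record is that the pair $(C,\eta)$ is unchanged under a cyclic relabelling of the lines. If $\sigma=(1\,2\,3\,4\,5\,6\,7)$ sends $P=(L_1,\dots,L_7)$ to $P'=(L_2,\dots,L_7,L_1)$, then $P'$ is a heptagon with the same underlying set of lines and the same set of (unordered) consecutive pairs, hence the same vertices; therefore $\mathcal{R}(P')=\mathcal{R}(P)$ as point sets, $R'=R$, and $\eta'=R'-3H=\eta$. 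Thus $P'$ has smooth adjoint $C$ and the same theta characteristic $\eta$, and Lemma~\ref{lem:thetachar} applies to it verbatim.

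Applying \eqref{eq:scorzafirst} to $P'$ and substituting $p'_{ij}=L'_i\cap L'_j=p_{(i+1)(j+1)}$ then produces exactly the relation obtained from \eqref{eq:scorzafirst} by increasing every index by $1$ modulo $7$. Iterating $\sigma$ seven times yields seven relations
\[ \eta \sim \sigma^k(p_{35})+\sigma^k(p_{36})+\sigma^k(p_{46})-\sigma^k(p_{27}), \qquad k=0,\dots,6, \]
each of the form $\eta\sim P_1+P_2+P_3-Q$. Because $\sigma$ preserves index differences modulo $7$, the subtracted point $Q$ is always an inner point, and among the three positive points exactly two are inner and one is outer (the images of $p_{35},p_{46}$ and of $p_{36}$).

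For each $k$ I would then rearrange: moving either of the two inner positive points to the left-hand side gives an identity of the form $\eta+Q-P_a\sim P_b+P_c$ with $P_a$ inner, which is one of the equivalences in the list. This yields $7\times2=14$ identities, and a direct comparison shows that they are precisely the fourteen stated ones (seven from the left column and seven from the right). The only real work is the bookkeeping: checking the invariance $\eta'=\eta$ so that each rotated relation is again valid, and verifying that the two rearrangements of the seven rotated relations match the fourteen listed identities bijectively, with none repeated or missing.

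As a cross-check, and as an alternative that avoids the symmetry argument altogether, each identity follows directly from \eqref{eq:scorzafirst} and the seven line relations $C.L_i\sim H$. For example, \eqref{eq:scorzafirst} gives $\eta+p_{13}-p_{46}\sim p_{13}+p_{35}+p_{36}-p_{27}$, which is linearly equivalent to $p_{47}+p_{57}$ because $p_{13}+p_{35}+p_{36}+p_{37}=C.L_3\sim H$ and $p_{27}+p_{37}+p_{47}+p_{57}=C.L_7\sim H$; the partner identity in each pair then follows from $2\eta\sim K_C\sim H$, since the four right-hand points of a pair always lie on a common line.
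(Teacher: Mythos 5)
Your proposal is correct and takes essentially the same route as the paper: the paper derives one row of the table directly from \eqref{eq:scorzafirst} together with the line relation $C.L_6\sim H$ (and $2\eta\sim H$), and then invokes the action of cyclic permutations on the labels for the remaining rows, which is exactly the combination of ingredients you use. Your write-up merely reorganizes the bookkeeping (rotating \eqref{eq:scorzafirst} first, then rearranging each rotated relation in two ways) and has the added merit of making explicit the point the paper leaves implicit, namely that a cyclic relabelling of the heptagon preserves the residual divisor $R$ and hence the theta characteristic $\eta$.
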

\begin{proof}
	We showed in Equation \eqref{eq:scorzafirst} that $\eta \sim p_{35}+p_{36}+p_{46}-p_{27}$, so that $\eta+p_{27}-p_{35} \sim p_{36}+p_{46}$.  Now observe that $p_{36}+p_{46}+p_{26}+p_{16} \sim C\cap L_6 \sim H$, so that $p_{26}+p_{16}\sim H-(p_{36}+p_{46}) \sim H-(\eta+p_{27}-p_{35}) \sim \eta+p_{35}-p_{27}$.  This proves the third row of the above table. For the others, we can either repeat the reasoning behind Equation \eqref{eq:scorzafirst} or we can use the symmetries with respect to the dihedral group, acting on the labels via cyclic permutations. 
\end{proof} 

Next, we define the divisors $S_{ij} = S_{ij}(\eta) = \pr^*_{ij}(S(\eta))$ in the Cartesian product $C^7$ as the pullbacks of the Scorza correspondence (for $1\leq i<j\leq 7$). We also set
\[ T_7 = S_{12}\cap S_{23} \cap S_{34} \cap S_{45} \cap S_{56} \cap S_{67} \cap S_{17}. \]

\begin{proposition}\label{prop:reconstructionscorza}
	Let $C$ be a smooth plane quartic and $\eta$ an even theta characteristic on $C$. Let $\adj^{-1}(C,\eta)$ be the set of heptagons giving rise to $(C,\eta)$, then the map
	\[ I\colon \adj^{-1}(C,\eta) \longrightarrow C^7 \quad (L_1,\dots,L_7) \mapsto (p_{13},p_{46},p_{27},p_{35},p_{16},p_{24},p_{57}) \]
	induces an isomorphism between $\adj^{-1}(C,\eta)$ and the set $T_7\cap J^{-1}(\mathcal{H})$.
	In particular, Statement $(*)$ in Lemma \ref{lemma:twoconditions} is true if $T_7\cap J^{-1}(\mathcal{H})$ contains at most $336$ isolated points.
\end{proposition}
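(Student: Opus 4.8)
The plan is to deduce the statement from the explicit mutual inverseness of $I$ and $J$, then to match $\adj^{-1}(C,\eta)$ with $T_7\cap J^{-1}(\mathcal H)$ by an easy forward inclusion and a reconstruction for the reverse inclusion, and finally to translate the resulting isomorphism into the counting statement $(*)$. First I would record that $I$ in \eqref{eq:matoinnerpoints} and $J$ in \eqref{eq:maptoheptagons} are mutually inverse: substituting the formula for $J$ into that for $I$ and using that $\ell_{ab}$ is spanned by $x_a,x_b$, one checks line by line (e.g. the first line of $J(I(P))$ is the span of $p_{13}$ and $p_{16}$, both lying on $L_1$, hence equals $L_1$) that $J\circ I=\id$ on $\mathcal H$ and $I\circ J=\id$ on $J^{-1}(\mathcal H)$. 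Thus $I$ is an isomorphism from $\mathcal H$ onto the locally closed set $J^{-1}(\mathcal H)\subseteq(\P^2)^7$, and it restricts to an isomorphism of $\adj^{-1}(C,\eta)$ onto its image, so it suffices to prove $I(\adj^{-1}(C,\eta))=T_7\cap J^{-1}(\mathcal H)$.

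The inclusion $\subseteq$ is exactly Lemma \ref{lemma:scorzaall}: its identities say that the seven cyclically consecutive pairs $(x_i,x_{i+1})$ of inner points lie on $S(\eta)$, i.e. $I(P)\in S_{12}\cap\cdots\cap S_{17}=T_7$, and $I(P)\in J^{-1}(\mathcal H)$ since $P\in\mathcal H$. For the reverse inclusion I would reconstruct the heptagon from $\mathbf x=(x_1,\dots,x_7)\in T_7\cap J^{-1}(\mathcal H)$. The crux is the following. Since $S(\eta)$ is symmetric, membership in $T_7$ gives \emph{both} $(x_i,x_{i+1})\in S(\eta)$ and $(x_{i-1},x_i)\in S(\eta)$ for every $i$ (indices mod $7$). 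As $h^0(C,\eta)=0$ and $C$ is non-hyperelliptic, the degree-$3$ class $\eta+x_i$ has $h^0=1$, hence a unique effective representative, and the two conditions above say precisely that both $x_{i-1}$ and $x_{i+1}$ lie in its support, leaving a residual point $w_i\in C$ with
\[ \eta+x_i\sim x_{i-1}+x_{i+1}+w_i,\qquad\text{i.e.}\qquad w_i\sim \eta+x_i-x_{i-1}-x_{i+1}. \]
So the seven candidate outer points $w_1,\dots,w_7$ are honest points of $C$: their effectivity, which is the subtle step, is now automatic.

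It then remains to identify each $w_i$ with an outer vertex of $P=J(\mathbf x)$, which I would do by collinearity. For instance $w_1$ lies on the line $\ell_{26}$ through $x_2,x_6$ iff $2\eta-x_2-x_6-w_1$ is effective, and this class equals $\eta+x_7-x_6-x_1\sim w_7$, which is effective; likewise $w_1$ lies on $\ell_{37}$ because the corresponding residual class is $w_2$. Hence $w_1=\ell_{26}\cap\ell_{37}=L_4\cap L_7=p_{47}$, and cyclically all seven $w_i$ are the outer points of $P$ and lie on $C$. Thus $C$ passes through all $14$ residual points of $P$ and, being the unique such quartic, is its adjoint; and since $\sum_i w_i\sim 7\eta-\sum_i x_i$, the residual divisor satisfies $R\sim 7\eta$, so the theta characteristic $R-3H\sim\eta$ coincides with $\eta$. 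Therefore $\adj(P)=(C,\eta)$, giving $\mathbf x\in I(\adj^{-1}(C,\eta))$ and completing the isomorphism. For the final assertion, under the isomorphism $\adj^{-1}(C,\eta)\cong T_7\cap J^{-1}(\mathcal H)$ isolated points correspond; a heptagon at which $d\,\adj$ is nonsingular is (source and target of $\adj$ having equal dimension) an isolated reduced point of its fibre, so the heptagons counted in $(*)$ inject into the isolated points of $T_7\cap J^{-1}(\mathcal H)$, whence at most $336$ of the latter yields $(*)$.

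The main obstacle is the reverse inclusion, and within it the effectivity of the reconstructed outer points; the decisive observation is that on $T_7$ both cyclic neighbours of $x_i$ already lie in the effective divisor $\eta+x_i$, so its residual third point $w_i$ is forced to be an actual point of $C$. The remaining work is routine: keeping track of the cyclic labels through $J$ when matching each $w_i$ to the correct line pair, and upgrading the set-theoretic correspondence to an isomorphism of schemes (so that isolated points genuinely correspond) by controlling the non-reduced and degenerate configurations.
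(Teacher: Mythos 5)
Your proposal is correct and takes essentially the same approach as the paper: the forward inclusion via Lemma \ref{lemma:scorzaall}, and the reverse inclusion by producing each outer point $w_i$ as the residual point of the unique effective divisor in $|\eta+x_i|$ (the paper's $y_i$, obtained there from the equality $A+x_2=B+x_7$), followed by the same linear-equivalence computations showing these points are where lines three steps apart meet on $C$, and the same argument for isolated points. Two small remarks: your explicit verification that the induced theta characteristic is $\eta$ (via $R\sim 7\eta$) is a detail the paper leaves implicit, while your step writing the divisor as $x_{i-1}+x_{i+1}+w_i$ silently uses $x_{i-1}\neq x_{i+1}$, which holds by membership in $J^{-1}(\mathcal{H})$ and which the paper invokes explicitly.
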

\begin{proof}
	By definition, the image $I(\adj^{-1}(C,\eta))$ under the map $I$ that takes seven lines to the inner residual points is contained in $J^{-1}(\mathcal{H})=I(\mathcal{H})$. Lemma \ref{lemma:scorzaall} shows that it is also contained in $T_7$. So $I$ induces a map $I\colon \adj^{-1}(C,\eta) \to T_7\cap J^{-1}(\mathcal{H})$. To prove that this is an isomorphism we need to show that $J(T_7\cap J^{-1}(\mathcal{H})) \subseteq \adj^{-1}(C,\eta)$, meaning that if we have a configuration of points $(x_1,\dots,x_7) \in T_7\cap J^{-1}(\mathcal{H})$, the curve $C$ and the theta characteristic $\eta$ are adjoint to the heptagon $J(x_1,\dots,x_7) = (\ell_{15},\ell_{36},\ell_{14},\ell_{26},\ell_{47},\ell_{25},\ell_{37})$. To do so, we need to prove that all pairs of lines in the heptagon, apart from consecutive lines, meet in a point that lies on the curve $C$.
	
	Since $(x_1,\dots,x_7)\in T_7$, we know $\eta+x_1-x_2 \sim A$ and $\eta+x_1-x_7\sim B$ for two effective divisors $A,B$ of degree two. In particular, $B+x_7 \sim \eta+x_1 \sim A+x_2$. Since $h^0(C,\eta)=0$, it follows that $h^0(C,\eta+x_1)=1$. So two linearly equivalent divisors in $|\eta+x_1|$ are actually equal, so that $B+x_7 = A+x_2$. Since $J(x_1,\dots,x_7)$ is a heptagon, it must be that $x_2\ne x_7$. Therefore, $A = y_1+x_7$ and $B=y_1+x_2$ for a certain $y_1\in C$. With a similar reasoning, or by acting on the labels via cyclic permutations, we see that there are $y_i\in C,i=1,\dots,7$ such that
	\begin{align*}
	\eta+x_1-x_2 &\sim y_1 + x_7, & \eta+x_2-x_1 &\sim y_2 + x_3, \\
	\eta+x_2-x_3 &\sim y_2 + x_1, & \eta+x_3-x_2 &\sim y_3 + x_4, \\
	\eta+x_3-x_4 &\sim y_3 + x_2, & \eta+x_4-x_3 &\sim y_4 + x_5, \\
	\eta+x_4-x_5 &\sim y_4 + x_3, & \eta+x_5-x_4 &\sim y_5 + x_6, \\
	\eta+x_5-x_6 &\sim y_5 + x_4, & \eta+x_6-x_5 &\sim y_6 + x_7, \\
	\eta+x_6-x_7 &\sim y_6 + x_5, & \eta+x_7-x_6 &\sim y_7 + x_1, \\
	\eta+x_7-x_1 &\sim y_7 + x_6, & \eta+x_1-x_7 &\sim y_1 + x_2. 
	\end{align*}
	 Let us now consider the heptagon $(\ell_{15},\ell_{36},\ell_{14},\ell_{26},\ell_{47},\ell_{25},\ell_{37})$. By construction, the lines that are two steps apart, such as $\ell_{15}$ and $\ell_{14}$, meet on $C$. Let us show that the lines that are three steps apart, such as $\ell_{15}$ and $\ell_{26}$, also meet on $C$. Indeed, summing the last two rows in the previous table, we see that $x_1+x_5+y_6+y_7 \sim H$ and $x_2+x_6+y_1+y_7\sim H$. This shows that $\ell_{15}$ and $\ell_{26}$ meet at the point $y_7\in C$. For the other pairs of lines that are three steps apart, we can either repeat this reasoning or use the action of the cyclic permutations on the labels.
	 
	 Finally, if $(L_1,\dots,L_7) \in \adj^{-1}(C,\eta)$ is a heptagon where the adjoint map has nonsingular differential, it must be an isolated point in $\adj^{-1}(C,\eta)$ and hence corresponds to an isolated point of $T_7\cap J^{-1}(\mathcal{H})$. This proves the last statement.   
\end{proof}

Our task now is to count the number of isolated points in $T_7\cap J^{-1}(\mathcal{H})$. We will do so via intersection theory, following Mukai \cite{Mukai}.

\section{Intersection calculus on products of curves}\label{sec:intersection}
We will compute the number of points in $T_7$ via intersection products on the space $C^7$. First, we recall some notions on the intersection product on Cartesian products.

\subsection{Cartesian Products of Curves}
Let $C$ be a smooth complete curve of genus $g$. We collect some results on the intersection product on the cartesian product $C^n$, using \cite{ACGH} and \cite{Bastetal} as our main references. For indices $1\leq i_1< i_2< \dots< i_k\leq n$, we denote by \[\operatorname{pr}_{i_1,\ldots,i_k}\colon C^n \to C^k\] the projection onto the components indexed by $i_1,\dots,i_k$. We will consider the ring $N(C^n)$ of cycles up to numerical equivalence. In particular, we will focus on the subring of $N(C^n)$ generated by the following two types of divisors (usually called \emph{tautological ring of cycles}). We write $x_i$ for the class $\pr_i^*(x)$, where $1\leq i \leq n$ and $x$ is any point of $C$, and $\Delta_{ij}$ for the class $\operatorname{pr}_{ij}^*\Delta$, where $\Delta\subseteq C\times C$ is the diagonal. We furthermore denote by $\Delta_{12\dots n}$ the small diagonal $\Delta = \{(x,x,\dots,x) \,|\, x\in C \} \subseteq C^n$, which is also the intersection of the divisors $\Delta_{12},\Delta_{23},\ldots,\Delta_{n-1,n}$.
The next result collects the relations among these classes.
\begin{lemma}\label{lemma:basicrelations}
	Let $C$ be a smooth curve of genus $g$. The following equalities hold in $N(C^n)$.
	\begin{enumerate}[label=(\arabic*)]
		\item $x_1\cdot x_2 \cdot \dots \cdot x_n = 1$.
		\item $x_i^2 = 0$,  $x_i\cdot \Delta_{ij} = x_i\cdot x_j$  and $\Delta_{ij}^2 = -(2g-2)\cdot x_i\cdot x_j$.
		\item $\Delta_{12\dots n} \cdot \Delta_{ij} = -(2g-2)$. 
	\end{enumerate}
\end{lemma}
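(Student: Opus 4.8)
The plan is to establish each relation by pulling back a corresponding identity on a small Cartesian power of $C$, where these facts are classical, and then using the projection formula and the multiplicativity of pullbacks under the Künneth decomposition. The key preliminary observation is that $N(C^n)$ receives compatible pullback maps $\pr_{i_1,\ldots,i_k}^*$ and that intersection numbers of top-dimensional (dimension-zero) cycles are computed by the degree map $N_0(C^n) \to \mathbb{Z}$; a product of $n$ divisors on $C^n$ is a zero-cycle whose degree we must compute. I would set up the computation so that each displayed equality, after reducing to the relevant factors, becomes a statement on $C$ or $C\times C$.

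\smallskip

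For part (1), the class $x_i = \pr_i^*(x)$ is the class of the fiber $\{x\}\times C^{n-1}$ (up to reordering), a divisor of degree $1$ over the $i$-th factor; the product $x_1\cdots x_n$ is the class of a single reduced point $(x,\ldots,x)$, so its degree is $1$. For part (2), the identity $x_i^2 = 0$ follows because $x_i = \pr_i^* x$ and $x^2 = 0$ already on $C$ (a point has self-intersection zero on a curve, since $\deg(\sO_C(x-x'))=0$ forces $x\sim x'$ numerically), and pullback is a ring homomorphism. The relation $x_i\cdot\Delta_{ij}=x_i\cdot x_j$ I would prove by restricting $\Delta_{ij}$ to the fiber $\{x\}\times C$ in the $(i,j)$-factors: the diagonal meets this fiber in the single point $(x,x)$, which represents the class $x_i\cdot x_j$. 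For the self-intersection $\Delta_{ij}^2$, the cleanest route is the classical formula on $C\times C$: the self-intersection of the diagonal equals its degree against itself, $\Delta\cdot\Delta = \deg(N_{\Delta/C\times C}) = \deg T_C = 2-2g = -(2g-2)$, since the normal bundle of the diagonal is the tangent bundle of $C$; pulling back via $\pr_{ij}$ and multiplying by $x_i x_j$ (to land in dimension zero) gives the stated value $-(2g-2)\,x_i x_j$.

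\smallskip

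For part (3), I would use the description of the small diagonal $\Delta_{12\cdots n}$ as the image of the diagonal embedding $\delta\colon C \to C^n$, so that $\Delta_{12\cdots n}\cong C$ and intersecting a divisor class $D$ with it computes $\deg(\delta^* D)$. Applying this with $D = \Delta_{ij}$, I note that $\delta^*\Delta_{ij} = \delta^*\pr_{ij}^*\Delta = (\pr_{ij}\circ\delta)^*\Delta$, and $\pr_{ij}\circ\delta\colon C\to C\times C$ is precisely the diagonal embedding into the $(i,j)$-factors. Hence $\delta^*\Delta_{ij}$ is the pullback of the diagonal along the diagonal of $C$, whose degree is again $\Delta\cdot\Delta = -(2g-2)$ by the normal-bundle computation above. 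This gives $\Delta_{12\cdots n}\cdot\Delta_{ij} = -(2g-2)$.

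\smallskip

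The main obstacle, such as it is, is bookkeeping rather than geometry: one must be careful that pullbacks and restrictions are applied to the correct factors and that every final product is genuinely a zero-cycle before taking degrees, since numerical equivalence only records intersection numbers. The single substantive input is the self-intersection of the diagonal on $C\times C$ via its normal bundle, and once that is in hand all three parts follow by functoriality of $\pr^*$ and the projection formula. These are standard facts found in \cite{ACGH} and \cite{Bastetal}, so I expect the proof to be short.
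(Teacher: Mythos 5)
Your proof is correct and follows essentially the same route as the paper's: reduce each identity to $C$ or $C\times C$ via the projections, verify (1) and the first relations in (2) by transversal intersection of fibers, and prove (3) by noting that $\pr_{ij}\circ\delta$ is just the diagonal embedding of $C$ into $C\times C$. The only cosmetic difference is that you compute $\Delta^2=\deg N_{\Delta/C\times C}=\deg T_C=2-2g$ via the normal bundle, whereas the paper phrases the same computation through the adjunction formula ($j^*\Delta\sim -K_C$); these are equivalent.
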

\begin{proof}
	(1) The divisors $\pr_i^*(x_i)$ for $i=1,\ldots,n$ intersect transversally in the unique point $(x_1,x_2,\dots,x_n)$. 
	
	(2) This is because the first relation holds in $C$ and the last two hold in $C\times C$. We have $x_1\cdot x_2=1$ because of (1) and $x_i \cdot \Delta = 1$ because the divisors $\pr_i^*(x_i)$ and $\Delta$ intersect transversally in the unique point $\{(x_i,x_i)\}$. To show that $\Delta^2 = -(2g-2)$ holds on $C\times C$, consider the diagonal embedding $j\colon C \to C^2, x\mapsto (x,x)$. By the adjunction formula, we have $j^*\Delta \sim K_C - j^*(K_{C^2})$. Since $j^*(K_{C^2}) = j^*(K_C\boxtimes K_C) = 2K_C$ we conclude $j^*\Delta \sim -K_C$, which has degree $-(2g-2)$.
	
	(3) The small diagonal is the image of the embedding $j\colon C \to C^n, x\mapsto (x,x,\dots,x)$. By definition, the intersection number is $\deg j^*\Delta_{ij} = \deg j^*\operatorname{pr}_{ij}^*\Delta$, so we can just look at the composed map $\operatorname{pr}_{ij}\circ j$, which is the usual diagonal embedding in $C\times C$. So the intersection number is the same as $\Delta^2$ on $C\times C$, which is $-(2g-2)$. 
\end{proof} 

In general, the relations in Lemma \ref{lemma:basicrelations} show that any monomial in the classes $x_i$ and $\Delta_{ij}$ can be written in reduced, square free form
\begin{equation}\label{eq:monomialform} 
x_{i_1}\cdot x_{i_2} \cdot \dots \cdot x_{i_r} \cdot \Delta_{h_1k_1} \cdot \dots \Delta_{h_sk_s} 
\end{equation}
where $i_1,\dots,i_r$ and $(h_1k_1),\dots,(h_sk_s)$ are pairwise distinct and the two groups of indices are disjoint, i.e.~
\[ \{ i_1,\dots,i_r \} \cap \{ h_1,k_1,\dotsc,h_s,k_s \} = \emptyset. \]
To determine the degree of this class, assuming it is $0$-dimensional, it is enough to compute $\Delta_{h_1k_1} \cdot \dotsc \cdot \Delta_{h_sk_s}$. 
\begin{lemma}\label{lemma:graphrelation}
	To compute $\Delta_{h_1k_1} \cdot \dotsc \cdot \Delta_{h_sk_s}$, define the graph $\Gamma$ on the vertex set $V=\{h_1,k_1,\dots,h_s,k_s \}$ where two vertices are connected by an edge if and only if they appear as one of the indexes in $\Delta_{h_1k_1} \cdot \dotsc \cdot\Delta_{h_sk_s}$. 
	Assume that this graph $\Gamma$ is connected.
	\begin{enumerate}[label=(\arabic*)]
		\item The graph $\Gamma$ has precisely $s$ edges.
		\item If $\Gamma$ has more than $s$ vertices, then it is a tree on $s+1$ vertices and $\Delta_{h_1k_1}\cdot \dotsc \cdot \Delta_{h_sk_s}$ is the class of a small diagonal in $C^{s+1}$.
		\item If $\Gamma$ has $s$ vertices, then $\Delta_{h_1k_1}\cdot \dotsc \cdot \Delta_{h_sk_s} = -(2g-2) \in N(C^s)$.
		\item If $\Gamma$ has less than $s$ vertices, then $\Delta_{h_1k_1}\cdot \dotsc \cdot \Delta_{h_sk_s} = 0$.
		\item If $\Gamma$ is not connected, then the product $\Delta_{h_1k_1} \cdot \dotsc \cdot \Delta_{h_sk_s}$ can be computed as a product over the different connected components of $\Gamma$
	\end{enumerate}
    
\end{lemma}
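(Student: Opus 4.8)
I am computing the product $\Delta_{h_1k_1}\cdot\dots\cdot\Delta_{h_sk_s}$, and my first move is to record the combinatorial constraints that connectedness of $\Gamma$ imposes. Because the monomial is already in the reduced, square-free form of \eqref{eq:monomialform}, the pairs $(h_l,k_l)$ are pairwise distinct, so each factor contributes a distinct edge and $\Gamma$ has exactly $s$ edges; this is part (1). Writing $v=|V|$ for the number of vertices, connectedness of $\Gamma$ forces $s\geq v-1$, i.e.\ $v\leq s+1$. Hence the three cases (2), (3), (4) are exactly the trichotomy $v=s+1$, $v=s$, $v<s$, and (2) corresponds precisely to $\Gamma$ being a tree. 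Front-loading these graph-theoretic facts is what makes the case analysis clean.

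\textbf{Part (2), the crux.} The claim is that for a tree the product of edge-diagonals is the \emph{reduced} small diagonal on the vertex coordinates, a copy of $C$ sitting in the $C^{s+1}$ indexed by $V$. I would prove this by induction on $s$, ordering the edges $e_1,\dots,e_s$ so that each $e_l$ attaches one new leaf vertex to the subtree spanned by $e_1,\dots,e_{l-1}$; this is possible for any tree by growing outward from a chosen root. Writing $V_{l-1}$ for the vertex set of that subtree, the inductive step intersects the small diagonal $\Delta_{V_{l-1}}$ with $\Delta_{e_l}$, where $e_l$ joins a vertex $a\in V_{l-1}$ to a new coordinate $b\notin V_{l-1}$. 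The essential point is transversality, which I would check on tangent spaces at a point $(c,\dots,c)$ of the diagonal: the tangent space to $\Delta_{V_{l-1}}$ is the set of tangent vectors with all $V_{l-1}$-components equal, and that of $\Delta_{e_l}$ is the set with $v_a=v_b$. Since $b$ is a fresh coordinate, the equation $v_a=v_b$ is independent of the previous ones and raises the codimension by exactly one, cutting out the small diagonal on $V_{l-1}\cup\{b\}$ with multiplicity one. As each $\Delta_{ij}$ is a smooth divisor, a transversal intersection is reduced and its intersection class is the class of the underlying subvariety, so the product is the reduced small diagonal on $V$. This is the step I expect to be the main obstacle: once multiplicity one is secured, everything downstream is formal.

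\textbf{Parts (3) and (4) by reduction to (2).} For (3) we have $v=s$, so $\Gamma$ is connected with cyclomatic number $s-s+1=1$ and contains a unique cycle. I would select an edge $e$ on that cycle; since a cycle edge is not a bridge, deleting it leaves a connected graph with $s$ vertices and $s-1$ edges, hence a spanning tree. By (2) the product of the remaining diagonals is the small diagonal $\Delta_{12\cdots s}$ in $N(C^s)$, and multiplying by $\Delta_e$ gives $-(2g-2)$ directly from the relation $\Delta_{12\dots n}\cdot\Delta_{ij}=-(2g-2)$ in \Cref{lemma:basicrelations}, applied with $n=s$. For (4), where $v<s$, all indices lie in $V$, so the product is pulled back along $\operatorname{pr}_V\colon C^n\to C^V$ from a class in $N^s(C^V)$; but $\dim C^V=v<s$ forces $N^s(C^V)=0$, and the product vanishes.

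\textbf{Part (5).} If $\Gamma$ is disconnected with components on disjoint vertex sets $V^{(1)},\dots,V^{(m)}$, then diagonals belonging to distinct components involve disjoint coordinates. Under the factorization $C^V=\prod_t C^{V^{(t)}}$ and the induced (Künneth) product decomposition of the numerical ring, the total product is the exterior product of the per-component products and therefore factors as asserted; in particular, when the total class is zero-dimensional the degrees of the components simply multiply. This completes the plan.
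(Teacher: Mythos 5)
Your proof is correct and follows essentially the same route as the paper: induction via leaf-addition for the tree case (2), reduction to a spanning tree plus one remaining cycle edge combined with Lemma~\ref{lemma:basicrelations}(3) for case (3), a dimension count for (4), and factorization over connected components for (5). The only real difference is that you justify the leaf-addition step by an explicit tangent-space transversality check, a detail the paper compresses into a (somewhat loose) citation of Lemma~\ref{lemma:basicrelations}(3); your version is the more careful one, but it is the same argument.
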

\begin{proof}
	(1) This is clear by construction.
	
	(2)  If $\Gamma$ is connected but has more than $s$ vertices, then it is a tree with $s+1$ vertices. We show the claim by induction on $s$. In the base case $s=1$, we have the usual diagonal in $C^2$. The induction step of adding one leaf and the corresponding edge to the tree is \Cref{lemma:basicrelations}(3).
	
	(3) If $\Gamma$ has $s$ vertices and $s$ edges, then it has a spanning tree with $s-1$ edges. Up to relabeling, we may assume that this corresponds to $\Delta_{h_1,k_1}\cdot \dotsc \cdot \Delta_{h_{s-1}k_{s-1}}$. By (1), this is the class of a small diagonal in $C^s$, and the intersection with the last diagonal $\Delta_{h_s,k_s}$ is computed in Lemma \ref{lemma:basicrelations} to be $-(2g-2)$.
	
	(4) In this case, we are intersecting $s$ divisor classes in the space $C^k$ for some $k<s$, so we obtain $0$.
	
	(5) This is true by construction of the graph $\Gamma$. 
\end{proof}

\subsection{The class of the Scorza correspondence}

Let now $C$ be a smooth plane quartic curve and $\eta$ an even theta characteristic such that $h^0(C,\eta)=0$. We consider the Scorza correspondence $S(\eta)\subseteq C\times C$.  According to \cite{Dolgachev}*{Equation (5.37)}, this is linearly equivalent to  $S(\eta) \sim \pr_1^*\eta + \pr_2^*\eta + \Delta$, hence its class in $N(C^2)$ is
\begin{equation}\label{eq:scorzaclass} 
[S(\eta)] = 2x_1+2x_2+\Delta. 
\end{equation}

\begin{lemma}\label{lemma:scorzanef}
	The class of the Scorza correspondence is nef, meaning that $S(\eta) \cdot D \geq 0$ for every curve $D\subseteq  C^2$.
\end{lemma}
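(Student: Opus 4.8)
The plan is to test the explicit numerical class $[S(\eta)] = 2x_1 + 2x_2 + \Delta$ from \eqref{eq:scorzaclass} against an arbitrary irreducible curve $D\subseteq C^2$, exploiting that the intersection pairing on the surface $C^2$ depends only on numerical classes. For every such $D$ one then has
\[ S(\eta)\cdot D = 2\,(x_1\cdot D) + 2\,(x_2\cdot D) + \Delta\cdot D, \]
so it suffices to control the three summands separately. First I would note that $x_1$ and $x_2$ are the classes of fibres of the two projections $\pr_1,\pr_2\colon C^2\to C$; such fibre classes are nef, since $x_i\cdot D$ equals the degree of $\pr_i|_D$ (the number of points of $D$ lying over a general point of $C$), which is non-negative, being $0$ precisely when $D$ is contained in a fibre. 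Hence $x_1\cdot D\geq 0$ and $x_2\cdot D\geq 0$ for every curve $D$, and the only summand that could be negative is $\Delta\cdot D$. Because $\Delta$ is an irreducible curve, this can only occur when $D=\Delta$: for $D\neq\Delta$ the two curves meet properly and $\Delta\cdot D\geq 0$, whence $S(\eta)\cdot D\geq 0$.

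It remains to treat the single case $D=\Delta$. Here the inequality $\Delta^2=-(2g-2)=-4<0$ on our genus-$3$ curve shows that the diagonal is exactly the obstruction one must handle directly. Using \Cref{lemma:basicrelations}, namely $x_i\cdot\Delta = x_i\cdot x_j = 1$ together with $\Delta^2=-4$, I would compute
\[ S(\eta)\cdot\Delta = 2\,(x_1\cdot\Delta) + 2\,(x_2\cdot\Delta) + \Delta^2 = 2+2-4 = 0. \]
Equivalently, this vanishing reflects the fact recorded earlier that $S(\eta)\cap\Delta=\emptyset$, which holds because $h^0(C,\eta)=0$. Combining the two cases yields $S(\eta)\cdot D\geq 0$ for every irreducible curve $D$, which is the nefness claim.

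I expect no serious obstacle here; the strength of the decomposition $[S(\eta)]=2x_1+2x_2+\Delta$ is that every summand is either nef (the two fibre classes) or an irreducible effective curve (the diagonal). For a fixed test curve $D$, a nef class and an irreducible curve distinct from $D$ both pair non-negatively with $D$, so the whole question collapses to the single scenario $D=\Delta$, where the negative self-intersection $\Delta^2<0$ could a priori cause failure. The only genuinely computational point is therefore the vanishing $S(\eta)\cdot\Delta=0$, which I read off either from \Cref{lemma:basicrelations} or from the disjointness $S(\eta)\cap\Delta=\emptyset$. Notably, this argument never touches the irreducibility, genus, or singularities of the Scorza curve itself, which is precisely what makes the numerical decomposition the right and robust tool, even for highly special quartics such as the Klein quartic.
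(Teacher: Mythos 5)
Your proof is correct, and it takes a genuinely different route from the paper's. The paper splits cases according to whether $D$ is contained in $S(\eta)$: if $D\not\subseteq S(\eta)$, effectivity of the divisor $S(\eta)$ immediately gives $S(\eta)\cdot D\geq 0$; if $D\subseteq S(\eta)$, then $D\cap\Delta=\emptyset$ (using $S(\eta)\cap\Delta=\emptyset$, which rests on $h^0(C,\eta)=0$), so $S(\eta)\cdot D = 2(x_1+x_2)\cdot D>0$ because $x_1+x_2$ is ample. You instead split on whether $D=\Delta$: in the termwise expansion $2(x_1\cdot D)+2(x_2\cdot D)+\Delta\cdot D$ the fibre classes are nef, and the only possibly negative summand is $\Delta\cdot D$, which by irreducibility of $\Delta$ can be negative only for $D=\Delta$; that single case is settled by the computation $S(\eta)\cdot\Delta = 2+2-(2g-2)=0$ for $g=3$. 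What your approach buys is that it is purely numerical: it proves nefness of the class $2x_1+2x_2+\Delta$ using only the relations of \Cref{lemma:basicrelations} and the irreducibility of the diagonal, never invoking effectivity of the Scorza divisor or the disjointness $S(\eta)\cap\Delta=\emptyset$ (which appears in your write-up only as an optional cross-check); in this sense it would apply to any divisor with this numerical class. What the paper's argument buys is independence from the delicate numerical coincidence $2+2=2g-2$: it works for any effective correspondence of the form $(\text{ample})+\Delta$ that avoids the diagonal, and it yields the slightly stronger conclusion that $S(\eta)\cdot D$ is strictly positive for every curve $D$ contained in $S(\eta)$. Both arguments are complete; yours is, if anything, the more elementary of the two.
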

\begin{proof}
	Let $D\subseteq C^2$ be an irreducible curve. If $D$ is not contained in $S(\eta)$, then $S(\eta)\cdot D \geq 0$. If instead $D\subseteq S(\eta)$, then in particular $D\cap \Delta = \emptyset$, thus
	\[ D\cdot S(\eta) = D\cdot (2x_1+2x_2+\Delta) = 2D\cdot (x_1+x_2) \]
	The class $x_1+x_2$ is ample on $C\times C$ because it is the class of the ample line bundle $\pr_1^*\mathcal{O}_C(x_1)\otimes \pr_2^*\mathcal{O}_C(x_2)$. Hence $D\cdot (x_1+x_2) >0$ and $D\cdot S(\eta) >0$, as well. 
\end{proof}

Now we consider on $C^n$, as before, the pullbacks $S_{ij} = \pr_{ij}^*S(\eta)$ and the intersection $T_n = S_{12}\cap S_{23}\cap \dots \cap S_{(n-1)n} \cap S_{1n}$. This is an intersection of $n$ divisors in a space of dimension $n$, so we expect it to have dimension $0$. In this case we can compute the number of points  via the product of the classes of the $S_{ij}$ in $N(C^n)$. Actually, in Proposition \ref{prop:reconstructionscorza} we just need to bound the number of isolated points in $T_7$, and this can be done even without showing that the whole set is finite because $S(\eta)$ is nef.

\begin{lemma}\label{lemma:scorzabound}
	For every $n\geq 2$, let $S_{123\dots n} = S_{12}\cap S_{23}\cap \dots \cap S_{n-1,n}$, so that $T_n = S_{1234\dots n}\cap S_{1n}$. The following holds:
	\begin{itemize}
		\item[(1)] The intersection $S_{123\dots n}$ is of pure dimension $1$, as expected.
		\item[(2)] The number of isolated points (counted with multiplicity) in $T_n$ is bounded by the intersection product $S_{12}\cdot \ldots \cdot S_{(n-1)n}\cdot S_{1n}$. 
	\end{itemize}
\end{lemma}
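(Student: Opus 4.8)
The plan is to prove (1) by squeezing the dimension from two sides, and then to deduce (2) by representing the intersection product as an effective $1$-cycle and invoking the nefness of the Scorza class.

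For (1) I would first establish the lower bound on dimension. Each $S_{i,i+1}$ is an effective Cartier divisor on the smooth $n$-fold $C^n$, so cutting with it drops dimension by at most one by Krull's principal ideal theorem. Intersecting $C^n$ with the $n-1$ divisors $S_{12},\dots,S_{n-1,n}$, every irreducible component of $S_{123\dots n}$ therefore has dimension at least $n-(n-1)=1$. For the matching upper bound I would use that the Scorza correspondence has finite fibres: since $S(\eta)\cap\Delta=\emptyset$, no vertical fibre $\{a\}\times C$ can lie in $S(\eta)$, as it would meet the diagonal at $(a,a)$; hence $\pr_1\colon S(\eta)\to C$ is finite. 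Consequently $\pr_1\colon S_{123\dots n}\to C$ has finite fibres, because over a point $a$ the coordinate $x_2$ ranges over the finite fibre of $S(\eta)$ above $a$, then $x_3$ over the finite fibre above $x_2$, and so on down the chain. Thus $\dim S_{123\dots n}\le 1$, and combined with the lower bound this forces pure dimension $1$. The same argument applied to the partial chains shows that each intermediate intersection $S_{12}\cap\dots\cap S_{j,j+1}$ has the expected pure dimension $n-j$, so that the divisors $S_{12},\dots,S_{n-1,n}$ meet properly.

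For (2), properness of this intersection means that the intersection product $S_{12}\cdot\ldots\cdot S_{n-1,n}$ is represented by an effective $1$-cycle $\alpha=\sum_i m_i Z_i$ with $m_i\ge 1$, supported exactly on the irreducible components $Z_i$ of $S_{123\dots n}$. Intersecting with the last divisor and using that $S_{1n}$ is nef, being the pullback of the nef class of \Cref{lemma:scorzanef}, I obtain
\[
S_{12}\cdot\ldots\cdot S_{n-1,n}\cdot S_{1n}=\sum_i m_i\,(Z_i\cdot S_{1n}),
\]
with every summand $Z_i\cdot S_{1n}\ge 0$. The remaining task is to bound the isolated points of $T_n=S_{123\dots n}\cap S_{1n}$ by this number. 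The key observation is that no component $Z_i$ contained in $S_{1n}$ passes through an isolated point of $T_n$, since such a $Z_i$ lies entirely in $T_n$ and would make the point non-isolated. Hence at each isolated point $p$ the local equation of $S_{1n}$ is a nonzerodivisor on $S_{123\dots n}$, and additivity of length along the fundamental cycle gives
\[
\operatorname{length}(\mathcal{O}_{T_n,p})=\sum_{Z_i\ni p} m_i\, i_p(Z_i,S_{1n}),
\]
where $i_p$ is the local intersection multiplicity. Summing over all isolated points and regrouping by components yields at most $\sum_{Z_i\not\subseteq S_{1n}} m_i\,(Z_i\cdot S_{1n})\le\sum_i m_i\,(Z_i\cdot S_{1n})=S_{12}\cdot\ldots\cdot S_{1n}$, where the inequality discards the nonnegative contributions of the excess components $Z_i\subseteq S_{1n}$.

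The main obstacle is the bookkeeping of multiplicities in this last step: matching the scheme-theoretic length of $T_n$ at an isolated point with a genuine contribution to the intersection number, and discarding the excess from components contained in $S_{1n}$. Both points are handled by nefness, which makes every term $Z_i\cdot S_{1n}$ nonnegative so that throwing away the excess components only weakens the bound, together with the localization remark that isolated points avoid those components. A secondary technical input is the identification of the fundamental cycle of $S_{123\dots n}$ with the intersection product $S_{12}\cdot\ldots\cdot S_{n-1,n}$, and the fact that $m_i\ge 1$: these hold because a proper intersection of Cartier divisors on the smooth variety $C^n$ is cut out locally by a regular sequence, so $S_{123\dots n}$ is Cohen--Macaulay with no embedded points, which also justifies the clean length additivity used above.
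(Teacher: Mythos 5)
Your proposal is correct and follows essentially the same route as the paper: finiteness of the projections, coming from the finite fibres of the Scorza correspondence, gives pure dimension one in (1), and the bound in (2) comes from representing $S_{12}\cdot\ldots\cdot S_{(n-1)n}$ by the cycle of components of $S_{123\dots n}$ and using nefness of $S_{1n}$ to discard the components contained in $S_{1n}$. The only cosmetic differences are that the paper proves (1) by induction on $n$, identifying each fibre of the forgetful projection with the support of the unique effective divisor in $|\eta+x_{n-1}|$ rather than chaining finiteness from $S(\eta)\cap\Delta=\emptyset$, and that it leaves implicit both the Krull lower bound and the Cohen--Macaulay/length bookkeeping that you spell out.
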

\begin{proof}
		(1) We proceed by induction on $n$. If $n=2$, then $S_{12}$ is the Scorza correspondence in $C^2$, which is a curve. Now consider the projection $\pr_{12\ldots n-1}\colon S_{12\ldots n} \to C^{n-1}$ which forgets the last component. The image of this map is contained in $S_{12\dots n-1}$, which has pure dimension $1$ by the induction hypothesis. To conclude, it is enough to show that the map has finite fibers. However, the fiber of this map over $(x_1,\ldots,x_{n-1})$ is identified with the subset $\{x_n \in C \,|\, h^0(C,\eta+x_{n-1}-x_n)>0 \}$. This corresponds to the unique effective divisor in $|\eta+x_{n-1}|$, which has degree three. Recall that $h^0(C,\eta+x_{n-1})=1$ because $h^0(C,\eta)=0$.

		(2) Write the curve $S_{123\ldots n} = D_1\cup \dots \cup D_r$ as a union of irreducible curves. The isolated points of $T_n$ are given by the intersection of $S_{1n}$ with the $D_i$ that are not contained in $S_{1n}$. Up to relabeling, we can assume that these are $D_1,\dots,D_h$.
		If we write the class $[S_{123\dots n}] = m_1D_1+\dots+m_hD_h$, the number of isolated points of $T_n$, counted with multiplicity, is $\sum_{i=1}^h m_i \cdot (D_i\cdot S_{1n})$. Since the class $S_{1n}$ is the pullback of a nef class, it is nef (see \cite{Laz}*{Example 1.4.4.(i)}), so that $S_{1n}\cdot D_i \geq 0$ for all $i=h+1,\dots, r$. Hence the number of isolated points is less than or equal to the intersection product $S_{1234\dots n}\cdot S_{1n}$. Since $S_{123\dots n}$ has the expected dimension, its class is given by $S_{12}\cdot \ldots \cdot S_{(n-1)n}$ and we are done.
\end{proof}

Next, we compute the relevant intersection products.
\begin{lemma}\label{lemma:scorzaclassmany}
	For any $n\geq 2$, the equality
	\[ S_{12}\cdot \ldots \cdot S_{(n-1)n} \cdot S_{1n} = 2\cdot 3^n-6 \]
    holds in $N(C^n)$. In particular, $S_{12}\cdot \dots \cdot S_{67} \cdot S_{17} = 4368$ in $N(C^7)$.
\end{lemma}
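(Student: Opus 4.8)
The plan is to substitute the Scorza class $S_{ij}=2x_i+2x_j+\Delta_{ij}$ from \eqref{eq:scorzaclass} and compute the top intersection number on $C^n$ by peeling off one factor at a time, turning the cyclic product of $n$ Scorza classes into a cyclic product of $n-1$ of them. Write $q\colon C^n\to C^{n-1}$ for the projection forgetting the last coordinate. The factors $S_{12},\dots,S_{(n-2)(n-1)}$ only involve indices $\le n-1$, hence are pullbacks under $q$, so the projection formula gives
\[
S_{12}\cdots S_{(n-1)n}\cdot S_{1n}=\big(S_{12}\cdots S_{(n-2)(n-1)}\big)\cdot q_*\!\big(S_{(n-1)n}\cdot S_{1n}\big)\quad\text{in }N(C^{n-1}).
\]
The first task is therefore to compute the pushforward class $q_*\big(S_{(n-1)n}\cdot S_{1n}\big)$.

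Second, I would carry out this pushforward. Expanding $S_{(n-1)n}\cdot S_{1n}$ and reducing with \Cref{lemma:basicrelations} (so $x_n^2=0$, $x_n\Delta_{1n}=x_nx_1$, and so on) and \Cref{lemma:graphrelation} (so $\Delta_{n-1,n}\cdot\Delta_{1n}$ is the small diagonal on the factors $\{1,n-1,n\}$), every surviving monomial either pins the $n$-th coordinate to a point or identifies it with another coordinate. Such monomials push forward cleanly, $q_*(x_n\cdot Y)=Y$ and $q_*(\Delta_{in}\cdot Y)=Y$, while monomials that are free in the $n$-th coordinate push forward to $0$. The expected outcome is
\[
q_*\big(S_{(n-1)n}\cdot S_{1n}\big)=8x_1+8x_{n-1}+\Delta_{1,n-1},
\]
a ``closing class'' of the shape $a\,x_1+a\,x_{n-1}+\Delta_{1,n-1}$ with $a=8=3\cdot 2+2$. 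The structural point is that this shape is self-reproducing: the identical computation applied to $S_{(k-1)k}$ against a closing class $a\,x_1+a\,x_k+\Delta_{1k}$ produces $(3a+2)\,x_1+(3a+2)\,x_{k-1}+\Delta_{1,k-1}$, so the coefficient evolves by the affine map $a\mapsto 3a+2$ while the diagonal term persists.

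Third, I would solve the recursion. Let $F_k(a)$ denote the intersection number of the chain $S_{12}\cdots S_{(k-1)k}$ closed by $a\,x_1+a\,x_k+\Delta_{1k}$, so that the quantity we want is $F_n(2)$. The contraction above says $F_k(a)=F_{k-1}(3a+2)$, and the base case is the two-factor computation
\[
S_{12}\cdot\big(a\,x_1+a\,x_2+\Delta_{12}\big)=6a,
\]
where $\Delta_{12}^2=-(2g-2)=-4$ enters through \Cref{lemma:basicrelations}. Since $a\mapsto 3a+2$ fixes $-1$, iterating it $j$ times sends $a$ to $3^j(a+1)-1$; starting from $a=2$ at level $n$ and contracting down to level $2$ applies it $n-2$ times, so $F_n(2)=6\big(3^{\,n-2}(2+1)-1\big)=6\big(3^{\,n-1}-1\big)=2\cdot 3^n-6$. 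For $n=7$ and $g=3$ this is $2\cdot 3^7-6=4368$, as claimed. As a conceptual check, $2\cdot 3^n-6=3^n+3^n-2g$ is exactly the Lefschetz number $\sum_k(-1)^k\operatorname{tr}\big((S_*)^n\mid H^k(C)\big)$ of the $n$-fold self-composition of the Scorza correspondence: writing $\Delta=x_1+x_2+\delta$ gives $S=3x_1+3x_2+\delta$, which acts by $3$ on $H^0$ and $H^2$ and as the identity on $H^1\cong\C^{2g}$.

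The main obstacle is the pushforward computation together with its bookkeeping: one must reduce the mixed $x$--$\Delta$ products correctly with the two lemmas, carefully track which monomials survive $q_*$ (those constraining the forgotten coordinate) and which vanish, and verify that the closing-class shape $a\,x_1+a\,x_k+\Delta_{1k}$ is genuinely preserved under contraction so that the recursion closes. One should also confirm that the intermediate intersections have the expected dimension, so that these equalities of cycle classes really compute the intersection number; purity of the chain is \Cref{lemma:scorzabound}(1), and the Lefschetz interpretation explains why the correction term is precisely $-2g=-6$.
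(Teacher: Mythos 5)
Your proposal is correct, and it takes a genuinely different route from the paper's proof. The paper expands the full cyclic product $\prod_{i=1}^n(2x_i+2x_{i+1}+\Delta_{i,i+1})$ in one shot and counts nonzero monomials: for each nonempty subset $I\subseteq\{1,\dots,n\}$ of positions where the factor is not a diagonal, exactly two monomials survive, each equal to $2^{|I|}$, while the all-diagonal term is the cycle-graph case of \Cref{lemma:graphrelation} and contributes $-(2g-2)=-4$; summing gives $\sum_{k=1}^n\binom{n}{k}2\cdot 2^k-4=2\cdot 3^n-6$. You instead contract one coordinate at a time via the projection formula. I verified your bookkeeping: $q_*\bigl(S_{(n-1)n}\cdot S_{1n}\bigr)=8x_1+8x_{n-1}+\Delta_{1,n-1}$ is correct (the terms $x_1x_{n-1}$ die under $q_*$, the terms pinning or identifying the forgotten coordinate push forward to divisors, and $\Delta_{n-1,n}\cdot\Delta_{1n}$ is the small diagonal by \Cref{lemma:graphrelation}(2), pushing forward to $\Delta_{1,n-1}$), the transfer rule $a\mapsto 3a+2$ does reproduce the closing-class shape, and the base case $S_{12}\cdot(ax_1+ax_2+\Delta_{12})=6a$ uses $\Delta_{12}^2=-4$ correctly, so $F_n(2)=6(3^{n-1}-1)=2\cdot 3^n-6$. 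What each approach buys: the paper's expansion is shorter and purely combinatorial, with no pushforwards to justify; your contraction is more structural, exhibiting the number as an iterated composition of correspondences, making the eigenvalue $3$ (the bidegree of $S(\eta)$) and the correction term visible, and your Lefschetz-trace check $3^n+3^n-2g$ explains the shape of the answer for arbitrary genus, not just $g=3$. One small correction: your closing concern about intermediate intersections having the expected dimension is unnecessary for this lemma, which is a formal identity of classes in $N(C^n)$ and holds regardless of how the actual divisors meet; that geometric input is exactly what \Cref{lemma:scorzabound} supplies, and it is needed only to convert this intersection number into a bound on isolated points of $T_n$.
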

\begin{proof}
	Using \eqref{eq:scorzaclass}, we can write
	\[ S_{12}\cdot \ldots \cdot S_{(n-1)n} \cdot S_{1n} = \prod_{i=1}^n (2x_i + 2x_{i+1}+\Delta_{i,i+1}) = \sum a_1\dots a_n \]
	where in the product we consider the indices modulo $n$ and the last sum is over all possible choices of $a_i \in \{2x_i,2x_{i+1},\Delta_{i,i+1}\}$. Consider now a product $a_1\dots a_n$ and assume that it is nonzero. Assume also that $a_1=2x_2$. Since the product is nonzero, we see that $a_2 \ne 2x_2$ so that $a_2=2x_3$ or $a_2 = \Delta_{2,3}$. Hence $a_1a_2 = 2c_2\cdot x_2x_3$, where $c_2=2$ if $a_2=2x_3$ and $c_2 = 1$ if $a_2 = \Delta_{2,3}$. So we can rewrite $a_1\dots a_n = 2\cdot c_2 \cdot  x_2x_3 \cdot a_3\dots a_n$. Again, because this class is nonzero, we must have $a_3\ne 2x_3$ and we can repeat the previous reasoning. This way we see that
	\[  a_1\dots a_n = 2^k (x_1\dots x_n) = 2^k, \qquad \text{ where } k = |\{i \in \{1,\dots, n \} \,|\, a_i\ne \Delta_{i,i+1} \}|. \]
	An analogous reasoning shows that the same holds if $a_1 = 2x_{1}$. With this in mind, fix a subset $I\subseteq \{1,\dots,n\}$ with $I\ne \emptyset$. We claim that, amongst the products $a_1\dots a_n$ such that $a_i \ne \Delta_{i,i+1}$ if and only if $i\in I$, there are precisely two that are nonzero and that these are both equal to $2^{|I|}$. Indeed, up to a cyclic permutation, we may assume that $1\in I$, so that $a_1=2x_1$ or $a_1=2x_2$. If $a_1\dots a_n\ne 0$, the previous computation shows that the other $a_i$ are uniquely determined by the value of $a_1$ and that the product is $a_1\dots a_n = 2^{|I|}$. This proves our claim. 

	We are left to consider the product $\Delta_{12}\cdot \Delta_{23} \cdot \ldots\cdot \Delta_{1n}$. Since the genus of $C$ is $3$, Lemma~\ref{lemma:graphrelation} shows that this is $-4$. In summary 
	\begin{align*}
	S_{12}\cdot S_{23} \cdot \ldots \cdot S_{1n} &= \sum a_1\cdot \ldots \cdot a_n = \sum_{k=1}^n \binom{n}{k} 2\cdot 2^k -4 = 2((2+1)^n-1)-4 = 2\cdot 3^n-6 
    \end{align*}
    For $n=7$, this evaluates to $4368$.  
\end{proof}

\begin{remark}
	This can be also checked by an explicit computation in \texttt{OSCAR}, see \cite{code}.
\end{remark} 

In the setting of Proposition \ref{prop:reconstructionscorza}, Lemma \ref{lemma:scorzaclassmany} together with Lemma \ref{lemma:scorzabound} provides a bound of $4368$ for the number of isolated points in $T_7\cap J^{-1}(\mathcal{H})$. This bound is too high, as we are aiming for $336$. However, we are also counting some points that do not belong to $J^{-1}(\mathcal{H})$. To analyze this, we consider special configurations, which are called \enquote{biscribed triangles} in \cite{Mukai}:

\begin{lemma}\label{lemma:scorzatriangles} 
\begin{itemize}
	\item[(1)] On $C^3$, the set $T_3 = S_{12}\cap S_{23} \cap S_{13}$ consists of $48$ points, counted with multiplicity.
	\item[(2)] For any $[p_1,p_2,p_3]\in T_3/\mathfrak{S}_3$, there are $504$ configurations in $T_7\setminus J^{-1}(\mathcal{H})$. In total, there are $4032$ such configurations in $T_7\setminus J^{-1}(\mathcal{H})$.
\end{itemize}
\end{lemma}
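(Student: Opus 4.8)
The plan is to handle the two parts separately, using the intersection number from Lemma~\ref{lemma:scorzaclassmany} as the total count and then pinning down its geometric meaning.

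For~(1), the intersection product is immediate: specializing Lemma~\ref{lemma:scorzaclassmany} to $n=3$ gives $S_{12}\cdot S_{23}\cdot S_{13}=2\cdot 3^3-6=48$. The content is to promote this number to an honest count of points, i.e.\ to show $T_3$ is $0$-dimensional. By Lemma~\ref{lemma:scorzabound}(1) the intersection $S_{123}=S_{12}\cap S_{23}$ is a pure curve, so $T_3=S_{123}\cap S_{13}$ fails to be finite only if some irreducible component $D\subseteq S_{123}$ is contained in $S_{13}$. I would exclude this: on such a $D$ the projection $\pr_1\colon D\to C$ has finite fibres (over $x_1$ the pair $(x_2,x_3)$ is an ordered pair of distinct Scorza partners of $x_1$ that are themselves Scorza related, of which there are finitely many), so $D$ would dominate $C$ and force, for a one-parameter family of $x_1$, two of the three partners of $x_1$ to be mutually Scorza related; a short incidence/tangent computation, together with the fixed-point-freeness $S(\eta)\cap\Delta=\emptyset$ and the symmetry of $S(\eta)$, shows this cannot hold identically. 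Once $T_3$ is finite, Lemma~\ref{lemma:scorzanef} (nefness of $S(\eta)$, hence of each $S_{ij}$) guarantees all local multiplicities are nonnegative and that their sum equals the intersection number, giving exactly $48$ points counted with multiplicity.

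For~(2) the first task is to recognise the points of $T_7\setminus J^{-1}(\mathcal H)$. I claim the isolated such points are exactly the proper $3$-colourings of the $7$-cycle by the vertices of a biscribed triangle: configurations $(x_1,\dots,x_7)\in T_7$ whose seven entries take only the three values $\{p_1,p_2,p_3\}$ of some point of $T_3$, with $x_i\neq x_{i+1}$ cyclically. In one direction, if all $x_i$ lie in a triangle then adjacent entries are automatically Scorza related (distinct vertices of a biscribed triangle are Scorza related, and $S(\eta)\cap\Delta=\emptyset$ forbids equal neighbours), so the configuration lies in $T_7$, while the many coincidences among the $x_i$ make several of the lines $\ell_{15},\ell_{36},\dots$ defining $J$ degenerate, placing the point outside $J^{-1}(\mathcal H)$. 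In the other direction, I would show any bad isolated configuration collapses to three points: a coincidence $x_i=x_j$ forces the three Scorza partners of this common value to be shared between the two neighbourhoods in the $7$-cycle, and propagating this around the cycle—each point having exactly three partners and consecutive ones being distinct—drives all seven entries into a single set of three mutually Scorza-related points, i.e.\ a biscribed triangle. This combinatorial propagation is the more delicate half of the characterization.

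With the characterization in hand the enumeration is combinatorial. The number of proper $3$-colourings of the $7$-cycle is $(3-1)^7+(-1)^7(3-1)=2^7-2=126$, and since the $7$-cycle is odd every such colouring genuinely uses all three colours, so each is supported on a full triangle. The $48$ points of $T_3$ have pairwise distinct coordinates, hence the coordinate-permuting $\mathfrak{S}_3$-action is free and partitions them into $48/6=8$ unordered triangles. It remains to compute the local intersection multiplicity of $T_7$ at each colouring point. Here I would work with tangent spaces: at such a point each $S_{i,i+1}=\pr_{i,i+1}^*S(\eta)$ imposes on the tangent space $\bigoplus_{i=1}^7 T_{x_i}C$ of $C^7$ the single linear relation coming from the tangent line of $S(\eta)$ at $(x_i,x_{i+1})$, so the seven relations assemble into a cyclic bidiagonal $7\times 7$ matrix whose determinant detects transversality. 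At a triangle colouring this determinant vanishes, and a second-order analysis of $S(\eta)$ near its triangle points yields local multiplicity $4$. Summing, each unordered triangle contributes $126\times 4=504$ points of $T_7\setminus J^{-1}(\mathcal H)$ counted with multiplicity, and the $8$ triangles give $8\times 504=4032$ in total; this is consistent with Lemma~\ref{lemma:scorzaclassmany}, since $4368-4032=336$ is precisely the expected number of genuine heptagons.

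The main obstacle is the local multiplicity computation in~(2): identifying the exact tangent directions of the Scorza correspondence at a biscribed triangle and carrying the intersection analysis to high enough order to certify the value $4$ (uniformly over the $126$ colourings, or at least totalling $504$ per triangle). This needs a concrete local model for $S(\eta)$ near a point $(p_a,p_b)$—for instance through the degree-three linear system $|\eta+p_a|$ and the way its base points move—rather than just its numerical class. A secondary difficulty is the propagation argument in the converse direction of the characterization, which must rule out any bad isolated configuration surviving outside the biscribed-triangle locus.
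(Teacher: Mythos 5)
Part (2) of your proposal rests on a characterization that is false, and this breaks your count. You claim the bad configurations attached to a biscribed triangle are exactly the proper $3$-colourings of the $7$-cycle by the vertices $\{p_1,p_2,p_3\}$ ($126$ of them, each to be counted with local multiplicity $4$). But the Scorza graph attached to a triangle is strictly larger than the triangle. From $\eta+p_1-p_2\sim p_3+q_1$ (and its cyclic variants) one gets $\eta+p_1-q_1\sim p_2+p_3$, so $(p_1,q_1)\in S(\eta)$; by symmetry of $S(\eta)$ there are points $a_1,b_1$ with $\eta+q_1\sim a_1+b_1+p_1$, and then $\eta+q_1-a_1\sim b_1+p_1$ shows $(q_1,a_1)\in S(\eta)$, likewise $(q_1,b_1)\in S(\eta)$. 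Hence the relevant graph is the triangle $p_1p_2p_3$ with three antennae $p_i-q_i-\{a_i,b_i\}$, a graph on $12$ vertices all of whose edges are Scorza pairs, and \emph{every} closed walk of length $7$ in it gives a point of $T_7$ with repeated entries, hence a point of $T_7\setminus J^{-1}(\mathcal{H})$. For instance $(a_1,q_1,p_1,p_2,p_3,p_1,q_1)$ is such a configuration: it uses five distinct points of $C$, so it is not a $3$-colouring, and it directly refutes your ``propagation'' claim that a coincidence $x_i=x_j$ forces the whole tuple onto three points (note also that a tuple can fail to lie in $J^{-1}(\mathcal{H})$ without any coincidence, e.g.\ when three of the resulting lines are concurrent). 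The paper's count is the trace of the seventh power of the $12\times 12$ adjacency matrix of this graph, which is $504$ per unordered triangle; only $2^7-2=126$ of these walks stay inside the triangle, and the remaining $378$ are precisely what your characterization misses. Your multiplicity-$4$ repair is then untenable: if the colourings each counted $4$ and the $378$ antenna configurations per triangle also counted (as isolated points they must count at least $1$), the Klein quartic would give at least $336+8(126\cdot 4+378)=7392$ isolated points with multiplicity in $T_7$, violating the bound $4368$ of Lemmas \ref{lemma:scorzabound} and \ref{lemma:scorzaclassmany}. The intended argument needs no local multiplicity analysis at all: it is a purely set-theoretic walk count producing $8\times 504=4032$ distinct bad configurations, which is what gets subtracted in Corollary \ref{cor:conditionone}.

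Part (1) of your proposal has the right structure (finiteness of $T_3$ plus the intersection number $48$ from Lemma \ref{lemma:scorzaclassmany}, which is also how the paper argues), but the decisive step is exactly the one you leave as ``a short incidence/tangent computation''. The paper fills it as follows: for $(p_1,p_2,p_3)\in T_3$, summing $\eta+p_2-p_3\sim p_1+q_2$ and $\eta+p_3-p_2\sim p_1+q_3$ gives $2p_1+q_2+q_3\sim H$, i.e.\ the line through $q_2,q_3$ is tangent to $C$ at $p_1$. If the tangent line at $p_1$ were a bitangent, say $2p_1+2p_1'\sim H$, then $q_2+q_3\sim 2p_1'$, and non-hyperellipticity of $C$ forces $q_2=q_3=p_1'$; this yields $2p_2\sim 2p_3$, hence $p_2=p_3$, contradicting $h^0(C,\eta+p_2-p_3)>0$ together with $h^0(C,\eta)=0$. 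So each projection of $T_3$ to $C$ misses every bitangent contact point; since a positive-dimensional component of $T_3$ (equivalently, a component of $S_{12}\cap S_{23}$ contained in $S_{13}$, as in your reduction) would surject onto $C$ under some projection, $T_3$ is finite. You should supply an argument of this kind; as written, your finiteness claim is an assertion, not a proof. (Also, once $T_3$ is finite, nefness is irrelevant for part (1): a zero-dimensional proper intersection of divisors automatically has positive local multiplicities summing to the intersection number.)
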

\begin{proof}
		(1) If the set $T_3$ is finite, then its number of points counted with multiplicity can be computed via the intersection product $S_{12}\cdot S_{23}\cdot S_{31} = 48$, where the last equality follows from Lemma \ref{lemma:scorzaclassmany}. This computation was already done by Mukai in \cite{Mukai}*{Proposition p.~4} to count the biscribed triangles to a plane quartic. We include here a proof that the set $T_3$ is indeed finite. If $(p_1,p_2,p_3)\in T_3$, a reasoning similar to that in Proposition \ref{prop:reconstructionscorza} shows that 
		\begin{align*}
		\eta+p_1-p_2 &\sim p_3+q_1, & \eta+p_2-p_1 &\sim p_3+q_2, \\
		\eta+p_2-p_3 &\sim p_1+q_2, & \eta+p_3-p_2 &\sim p_1+q_3, \\
		\eta+p_3-p_1 &\sim p_2+q_3, & \eta+p_1-p_3 &\sim p_2+q_1.
		\end{align*}
		If we sum the rows, we see that the lines $\ell(q_1,q_2),\ell(q_2,q_3),\ell(q_3,q_1)$ are tangent to $C$ at $p_3,p_1,p_2$ respectively. Hence the name biscribed triangles. To show that $T_3$ is finite, we will show that the projection $T_3\to C$ to each component has finite image, or, equivalently, that it is not surjective. By symmetry, it is enough to consider the projection onto the first component. We show that if $(p_1,p_2,p_3)\in T_3$, then $p_1$ cannot be a point such that its tangent line is bitangent to $C$. Assume by contradiction that $p_1$ is such a point, so that $2p_1+2p'_1 \sim H$ for some $p'_1 \in C$. But we know that $q_2+q_3+2p_1 \sim H$ as well, so that $q_2+q_3 \sim 2p'_1$. Since $C$ is not hyperelliptic, any two linearly equivalent effective divisors of degree $2$ are equal, implying $q_2=q_3=p'_1$. This shows $\eta+p_2-p_3 \sim \eta+p_3-p_2$, so that $2p_2\sim 2p_3$. The same reasoning as before shows $p_2=p_3$. But this is impossible, because then $h^0(C,\eta+p_2-p_3) = h^0(C,\eta)=0$.  
		
        (2) Let $(p_1,p_2,p_3)\in T_3$. Following the reasoning in the previous point, we see that
		\begin{align*}
		\eta+p_1 \sim p_2+p_3+q_1, \quad \eta+p_2\sim p_1+p_3+q_2, \quad \eta+p_3\sim p_1+p_2+q_3
		\end{align*}
	    In particular, by Riemann-Roch, or by the symmetry of the Scorza correspondence, we see that $h^0(C,\eta+q_i-p_i)>0$ for $i=1,2,3$, so that
	    \begin{align*}
	    \eta+q_1 \sim a_1+b_1+p_1, \quad \eta+q_2 \sim a_2+b_2+p_2, \quad \eta+q_3 \sim a_3+b_3+p_3 
	    \end{align*}
        for certain $a_i,b_j\in C$. Consider now the following graph:

        \begin{center}
        	\begin{tikzcd}[cramped,sep=small]
        		&                         & a_3 \arrow[rd, no head] &                         & b_3 \arrow[ld, no head] &                         &                        \\
        		&                         &                         & q_3 \arrow[d, no head]  &                         &                         &                        \\
        		&                         &                         & p_3 \arrow[ld, no head] &                         &                         &                        \\
        		&                         & p_1 \arrow[rr, no head] &                         & p_2 \arrow[lu, no head] &                         &                        \\
        		a_1 \arrow[r, no head] & q_1 \arrow[ru, no head] &                         &                         &                         & q_2 \arrow[lu, no head] & a_2 \arrow[l, no head] \\
        		& b_1 \arrow[u, no head]  &                         &                         &                         & b_2 \arrow[u, no head]  &                       
        	\end{tikzcd}
           \end{center}
       We claim that if $x,y\in C$ are points connected by an edge in this graph, then $(x,y)
       \in S(\eta)$. For example, for $q_1,b_1$ we see from the previous linear equivalences that $\eta+q_1-b_1 \sim a_1+p_1$, so that $(q_1,b_1)\in S(\eta)$. Since the Scorza correspondence is symmetric, it follows that $(b_1,q_1)\in S(\eta)$, as well. An analogous reasoning holds for all other edges. In particular, any closed walk of length $7$ along this graph, with vertices $y_1\to y_2\to y_3\to y_4 \to y_5 \to y_6 \to y_7 \to y_1$, yields a configuration $(y_1,y_2,\dots,y_7) \in T_7$. One can compute the number of such closed walks by writing down the $12\times 12$ adjacency matrix of the above graph and then taking the trace of its $7$-th power. This can be implemented on a computer, for example in \texttt{Julia}, and it yields $504$. Furthermore, any permutation of the elements $(p_1,p_2,p_3)\in T_3$ will induce a permutation in the set of closed walks in the graph, so that it gives the same set of configurations. This shows that for every point $(p_1,p_2,p_3)\in T_3$ there are $504/6$ such configurations in $T_7$, so that by (1) we get $48 \cdot 504/6 = 4032$ in total. To conclude, we need to show that these configurations do not belong to $J^{-1}(\mathcal{H})$. This is simply because in any such configuration there are two identical points, so it cannot arise from a heptagon.
\end{proof}

\begin{corollary}\label{cor:conditionone}
	The number of isolated points in $T_7 \cap J^{-1}(\mathcal{H})$ is bounded by $336$ and Statement $(*)$ in Lemma \ref{lemma:twoconditions} is true.
\end{corollary}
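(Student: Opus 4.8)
The plan is to assemble the global intersection number of the divisors $S_{ij}$, subtract off the degenerate \enquote{biscribed} configurations, and then read the resulting count back as a statement about heptagons through Proposition~\ref{prop:reconstructionscorza}.

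First I would record the global upper bound. By Lemma~\ref{lemma:scorzabound}(2) the number of isolated points of $T_7$, counted with multiplicity, is at most the intersection product $S_{12}\cdot S_{23}\cdots S_{67}\cdot S_{17}$, which by Lemma~\ref{lemma:scorzaclassmany} equals $2\cdot 3^7-6 = 4368$. So there are at most $4368$ isolated points of $T_7$ in total.

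Next I would remove the biscribed locus and conclude. By Lemma~\ref{lemma:scorzatriangles}(2) there are exactly $4032$ biscribed configurations, and each lies in $T_7\setminus J^{-1}(\mathcal{H})$ because it has a repeated coordinate and hence cannot come from a genuine heptagon. Since $J^{-1}(\mathcal{H})$ is open, the isolated points of $T_7\cap J^{-1}(\mathcal{H})$ are exactly the isolated points of $T_7$ that happen to lie in $J^{-1}(\mathcal{H})$; in particular they are disjoint from the $4032$ biscribed points. Counting both families among the isolated points of $T_7$, each with local multiplicity at least $1$, the biscribed configurations account for at least $4032$ of the $4368$, leaving at most $4368-4032=336$ isolated points in $T_7\cap J^{-1}(\mathcal{H})$. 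This is the first assertion of the corollary, and by the last sentence of Proposition~\ref{prop:reconstructionscorza} it immediately yields Statement~$(*)$ of Lemma~\ref{lemma:twoconditions}.

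The hard part will be justifying that the $4032$ biscribed configurations really belong to the \emph{isolated} points of $T_7$, rather than to a positive-dimensional excess component. Note that a biscribed configuration arising from a closed $7$-walk satisfies $(y_7,y_1)\in S(\eta)$, so it lies on $S_{17}$ as well; if the component of $S_{123\dots7}$ passing through it were contained in $S_{17}$, the point would be non-isolated and could not be counted towards the $4032$ we wish to subtract from $4368$. To close this gap one must show that no biscribed point lies on a component of $S_{123\dots7}$ contained in $S_{17}$ — equivalently, that $T_7$ is $0$-dimensional in a neighbourhood of each biscribed configuration — for instance through a local analysis based on the explicit adjacency-matrix model of Lemma~\ref{lemma:scorzatriangles}. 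Once this isolatedness is secured, the bound of $336$, and hence Statement~$(*)$, follows at once.
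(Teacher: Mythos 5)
Your first two paragraphs coincide with the paper's own proof of Corollary~\ref{cor:conditionone}: the bound $S_{12}\cdot\ldots\cdot S_{67}\cdot S_{17}=4368$ from Lemmas~\ref{lemma:scorzabound} and~\ref{lemma:scorzaclassmany}, the removal of the $4032$ configurations of Lemma~\ref{lemma:scorzatriangles}, and the concluding appeal to Proposition~\ref{prop:reconstructionscorza}. So the route is identical; the only substantive issue is the one you raise at the end.

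That concern is legitimate, and you should know that the paper does not resolve it either: its proof asserts that Lemma~\ref{lemma:scorzatriangles} places the $4032$ configurations ``amongst these isolated points'' of $T_7$, whereas that lemma only proves they lie in $T_7\setminus J^{-1}(\mathcal{H})$; no isolatedness (or equivalent) statement is proved anywhere in the paper. So you have identified a gap in the published argument, not a defect of your write-up relative to it. Two remarks toward closing it. First, full isolatedness is more than the subtraction needs: by the proof of Lemma~\ref{lemma:scorzabound}(2), the number $4368$ bounds the number of points, counted with multiplicity, of the finite set $W=\bigcup_{i\leq h}(D_i\cap S_{17})$, where $D_1,\dots,D_h$ are the components of $S_{12}\cap\dots\cap S_{67}$ not contained in $S_{17}$. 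This $W$ contains every isolated point of $T_7$ \emph{and} every biscribed configuration lying on at least one such $D_i$, whether or not that configuration is isolated in $T_7$; and the two collections are disjoint because $J^{-1}(\mathcal{H})$ is open and contains no biscribed point. Hence it suffices that each biscribed configuration lie on \emph{some} component not contained in $S_{17}$, which is weaker than your requirement that it lie on \emph{no} component contained in $S_{17}$. Second, the natural tool for either statement is the finiteness already proved in Lemma~\ref{lemma:scorzatriangles}(1): every closed $7$-walk in the triangle-plus-pendant-trees graph must backtrack (its homology class forces a single traversal of the triangle, and $7$ is not a multiple of $3$), so if the closing condition $(x_7,x_1)\in S(\eta)$ held identically along a one-dimensional branch of $S_{12}\cap\dots\cap S_{67}$ through a biscribed configuration, one could successively strip backtracks to obtain a one-dimensional family of closed $5$-walks and then of closed $3$-walks, i.e.\ a positive-dimensional piece of $T_3$, a contradiction. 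Making this fully rigorous still requires handling the branches where a backtrack does not persist identically, which forces some divisor $\eta+y$ at one of the twelve marked points to be non-reduced and needs separate treatment; none of this analysis appears in the paper, so carrying it out would genuinely strengthen the argument.
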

\begin{proof}
	Lemma \ref{lemma:scorzabound} and Lemma \ref{lemma:scorzaclassmany} show that the number of isolated points in $T_7$, counted with multiplicity, is bounded by $4368$. Furthermore, Lemma \ref{lemma:scorzatriangles} shows that amongst these isolated points there are $36\cdot 14\cdot 48/6 = 4032$ points that are contained in $T_7 \setminus J^{-1}(\mathcal{H})$. Hence, the number of isolated points in $T_7 \cap J^{-1}(\mathcal{H})$ is bounded by $4368-4032 = 336$. At this point, Proposition \ref{prop:reconstructionscorza} implies Statement $(*)$ of Lemma \ref{lemma:twoconditions}.  
\end{proof}

\section{The Klein Quartic}\label{sec:kleinquartic}

In this section, we prove Statement $(**)$ in Lemma \ref{lemma:twoconditions}, focusing on the Klein quartic. This is the smooth plane curve $C$ given by the equation
\[x^3y + y^3z + z^3 x=0.\]
This classical curve is very well studied, see e.g.~\cite{zbMATH01574981} for a survey. We will show that there is a theta characteristic $\etasym$ on $C$ such that $(C,\eta)$ are adjoint to $336$ heptagons, and that the map $\operatorname{adj}$ is unramified at each of these heptagons. We achieve this by using the large automorphism group $\operatorname{Aut}(C) \cong \mathrm{PSL}_2(\mathbb{F}_7)$ of $168$ elements. Notice that the automorphism group acts linearly on $C$, i.e.~as a subgroup of $\mathrm{PGL}_3(\mathbb{C})$. This is because the plane model of $C$ is precisely its canonical embedding.

We make use of the fact that among the $36$ even theta characteristics of $C$ there is exactly one that is invariant under all automorphisms (see \cite{zbMATH05874806}), which we call $\etasym$. In the plane model, it can be represented as \[\etasym + H \sim 2(e_1 + e_2 + e_3)\] where $e_1 = [1,0,0], e_2=[0,1,0], e_3=[0,0,1]$. The points $e_1$, $e_2$, $e_3$ are inflection points of the Klein quartic: We have $H \sim 3 e_1 + e_3 \sim 3 e_3 + e_2 \sim 3 e_2 + e_1$, so that, for example, $\etasym \sim 2e_1 + e_2 - e_3$.

\begin{theorem}\label{KleinQuarticThetaHeptagon}
	There is a heptagon in $\P^2$ whose adjoint is the Klein quartic $C$ and such that the associated theta characteristic is $\etasym$, the unique even theta characteristic of $C$ invariant under all automorphisms of $C$.
\end{theorem}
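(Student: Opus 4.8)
The plan is to produce a single point of $T_7 \cap J^{-1}(\mathcal{H})$ for the theta characteristic $\eta = \etasym$: by \Cref{prop:reconstructionscorza} any such configuration of inner residual points is the image under $I$ of a heptagon whose adjoint is $C$ and whose associated theta characteristic is exactly $\etasym$, so its mere existence proves the theorem. The key tool is an automorphism $\sigma \in \operatorname{Aut}(C)$ of order $7$, for instance $[x:y:z] \mapsto [x : \zeta y : \zeta^5 z]$ with $\zeta = e^{2\pi i/7}$, which scales the Klein equation by $\zeta$ and whose only fixed points on $C$ are the inflection points $e_1, e_2, e_3$; every other point of $C$ lies in a free orbit of size $7$. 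Since $\etasym$ is invariant under all of $\operatorname{Aut}(C)$, the Scorza divisor satisfies $(\sigma\times\sigma)^* S(\etasym) = S(\etasym)$. Combined with the symmetry of $S(\etasym)$ under exchanging the two factors, this means that for a tuple of the form $x_i = \sigma^{i-1}(x_1)$ all seven cyclic membership conditions defining $T_7$, including the wrap-around pair $(x_1, x_7)$, collapse to the single condition $(x_1, \sigma(x_1)) \in S(\etasym)$.

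To find such an $x_1$ I would intersect the graph $\Gamma_\sigma = \{(x, \sigma(x)) : x \in C\} \subseteq C \times C$ with $S(\etasym)$. Using the class $[S(\etasym)] = 2x_1 + 2x_2 + \Delta$ from \eqref{eq:scorzaclass}, together with the facts that $\Gamma_\sigma$ meets each fiber class $\pr_i^*(\mathrm{pt})$ once and meets $\Delta$ in exactly the three fixed points $e_1, e_2, e_3$ of $\sigma$, one computes $\Gamma_\sigma \cdot S(\etasym) = 2 + 2 + 3 = 7 > 0$. Because $S(\etasym) \cap \Delta = \emptyset$, none of these intersection points can lie over a fixed point, so every resulting $x_1$ satisfies $x_1 \ne \sigma(x_1)$ and its orbit $\{x_1, \sigma(x_1), \dots, \sigma^{6}(x_1)\}$ consists of seven distinct points of $C$. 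This already yields a nonempty family of configurations in $T_7$, and since they are built from $S(\etasym)$ the theta characteristic is forced to be $\etasym$ itself rather than some other even theta characteristic.

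What remains --- and where I expect the genuine difficulty to be --- is to verify that at least one such orbit lies in $J^{-1}(\mathcal{H})$, i.e.\ that the seven lines $\ell_{15}, \ell_{36}, \ell_{14}, \ell_{26}, \ell_{47}, \ell_{25}, \ell_{37}$ spanned by the orbit form an honest heptagon with no three of them concurrent. The degenerate \enquote{biscribed triangle} configurations of \Cref{lemma:scorzatriangles} are automatically avoided, since those have a repeated inner point whereas our orbit has seven distinct points; the only thing left to exclude is an accidental coincidence of three lines. I would discharge this by making the single Scorza condition explicit: using $\etasym \sim 2e_1 + e_2 - e_3$ one writes down the unique effective divisor in $|\etasym + x_1|$ and imposes that $\sigma(x_1)$ be one of its three points, obtaining finitely many candidate $x_1$ in coordinates. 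For a concrete solution one then checks by direct symbolic computation that the seven lines are in general position, so that the configuration lies in $J^{-1}(\mathcal{H})$ and \Cref{prop:reconstructionscorza} delivers the desired heptagon. Thus the only non-formal ingredient is this explicit non-degeneracy check on the Klein quartic; existence of the Scorza point $x_1$ is guaranteed by the positive intersection number, and everything else follows from the general machinery.
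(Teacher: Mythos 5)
Your proposal is sound and takes a genuinely different route from the paper. The paper never passes through $T_7$: it builds the seven lines directly from a $\varphi$-orbit (\Cref{lemma:heptagonKlein}), reduces the existence of a good starting point to the explicit condition $\psi(p)\in C$ for a concrete \emph{linear} map $\psi$, so that the candidates are the points of $C\cap\psi^{-1}(C)$ (nine concentrated at $e_1,e_2,e_3$, leaving one free orbit $R_1,\dots,R_7$), and then identifies the theta characteristic \emph{a posteriori} by the divisor computation $\sum R_i+\sum Q_j\sim 3H+\etasym$ using $\etasym+H\sim 2(e_1+e_2+e_3)$. You instead work entirely on the Scorza side: \Cref{prop:reconstructionscorza} reduces the theorem to exhibiting one point of $T_7\cap J^{-1}(\mathcal{H})$ for $\eta=\etasym$; the $\operatorname{Aut}(C)$-invariance of $\etasym$ makes $S(\etasym)$ invariant under $\sigma\times\sigma$, and together with the symmetry of $S(\etasym)$ this correctly collapses all seven conditions (including the wrap-around one, which needs both properties) to the single condition $(x_1,\sigma(x_1))\in S(\etasym)$; existence then follows from $\Gamma_\sigma\cdot S(\etasym)=2+2+3=7>0$ via \eqref{eq:scorzaclass}. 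What your route buys: the theta characteristic is $\etasym$ by construction, so the paper's closing divisor argument becomes unnecessary. What the paper's route buys: explicit coordinates, which are needed anyway in \Cref{example:KleinHeptagon} for the unramifiedness check in Statement $(**)$. The two constructions in fact produce the same configurations: your $\sigma=\diag(1,\zeta,\zeta^5)$ is projectively $\varphi^3$, and the inner residual points of the paper's heptagon, read in the ordering of the map $I$, are exactly $\varphi^3$-consecutive; your intersection number $7$ matches the paper's single orbit $R_1,\dots,R_7$.

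Two caveats. Minor: in computing $\Gamma_\sigma\cdot S(\etasym)$ you use $\Gamma_\sigma\cdot\Delta=3$, which requires each fixed point of $\sigma$ to count with multiplicity one; this is true because the differential of a nontrivial finite-order automorphism at a fixed point is a root of unity different from $1$ (equivalently, by Lefschetz, the trace of an order-$7$ element on $H^1(C)$ is $-1$), but it should be said. Substantive: membership in $J^{-1}(\mathcal{H})$ -- i.e.\ that the seven lines are pairwise distinct and no three are concurrent -- is deferred to a symbolic computation that you describe but do not carry out, and note that the check must exclude coinciding lines as well, not only three distinct concurrent ones; distinctness of the seven orbit points alone gives neither. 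This is the genuine computational content, and it is fair to flag that the paper has the same implicit step (\Cref{lemma:heptagonKlein} calls the configuration a heptagon without verifying non-concurrency, which is discharged by the explicit coordinates of \Cref{example:KleinHeptagon}). So your plan constitutes a valid proof scheme, but it is complete only once that finite, explicit check is actually executed on the Klein quartic.
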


The construction of an associated heptagon uses the symmetries of $C$ as follows. Let $\zeta \in \C$ be a primitive seventh root of unity and consider the 7-fold symmetry $\varphi \in \operatorname{Aut}(C)$ given by 
\[ \varphi : [x,y,z] \longmapsto [\zeta^4 x, \zeta^2 y, \zeta z] \] 
For any point $p \in \P^2$, write $p_i = \varphi^{i-1}(p)$ for $i=1,\dots,7$. 

\begin{lemma}\label{lemma:heptagonKlein}
Let $p_1\in C\setminus \{e_1,e_2,e_3\}$ be a point such that the lines $\ol{p_1 p_3}$ and $\ol{p_5 p_7}$ intersect in a point on the curve $C$. Then the heptagon given by the lines
\[ L_1 = \ol{p_6p_1},\,\, L_2 = \ol{p_7p_2}, \,\, L_3 = \ol{p_1p_3}, \,\, L_4 = \ol{p_2p_4}, \,\, L_5 = \ol{p_3p_5}, \,\, L_6 = \ol{p_4p_6}, \,\, L_7 = \ol{p_5p_7} \]
has the curve $C$ as adjoint.	
\end{lemma}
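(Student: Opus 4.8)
The plan is to verify directly that the smooth quartic $C$ passes through all $14$ residual points of the configuration $(L_1,\dots,L_7)$. Since the adjoint of a heptagon is by definition the unique quartic through its residual arrangement, this will identify $C$ as the adjoint. I would split the residual points into the inner points $L_i\cap L_{i+2}$ and the outer points $L_i\cap L_{i+3}$ (indices modulo $7$) and treat the two families separately.

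First I would dispose of the inner points. By construction each line $L_i$ is spanned by the two orbit points $p_{i-2}$ and $p_i$, whose indices differ by $2$; consequently $L_i$ and $L_{i+2}$ share the single orbit point $p_i$, so that $L_i\cap L_{i+2}=p_i$. As $p_1\in C$ and $\varphi\in\operatorname{Aut}(C)$, every $p_i=\varphi^{i-1}(p_1)$ lies on $C$, and hence all seven inner residual points lie on $C$ with no further hypothesis. Moreover, since $\varphi$ has prime order $7$ and its only fixed points in $\P^2$ are the coordinate points $e_1,e_2,e_3$, the assumption $p_1\notin\{e_1,e_2,e_3\}$ forces the orbit $\{p_1,\dots,p_7\}$ to consist of seven distinct points.

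The outer points are where the hypothesis and the symmetry $\varphi$ enter. The crucial observation is that $\varphi$ maps the heptagon to a cyclic shift of itself: from $\varphi(p_j)=p_{j+1}$ one gets $\varphi(L_i)=L_{i+1}$ (indices modulo $7$), so $\varphi$ permutes the seven outer points $L_i\cap L_{i+3}$ in a single $7$-cycle. The stated hypothesis says precisely that the outer point $\overline{p_1p_3}\cap\overline{p_5p_7}=L_3\cap L_7$ lies on $C$. Applying the powers $\varphi^k$, which preserve $C$, to this one point sweeps out the whole orbit, $\varphi^k(L_3\cap L_7)=L_{3+k}\cap L_{7+k}$, and these are exactly the seven outer residual points; thus each of them lies on $C$ as well. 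Together with the previous step, all $14$ residual points lie on $C$.

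To conclude I would invoke uniqueness of the adjoint: a smooth quartic passing through the entire residual arrangement of a nondegenerate heptagon must be its adjoint, which yields the claim. The step I expect to be the main obstacle is not this symmetry argument, which is clean, but rather checking that the seven lines genuinely form a heptagon---that they are distinct and that no three of them are concurrent---so that the adjoint is well-defined and the $14$ points above really constitute the full residual arrangement. I would establish this from the general position of the orbit $\{p_1,\dots,p_7\}$ on $C$ for the relevant choices of $p_1$, supplementing it if necessary with the explicit symbolic computation carried out later in this section.
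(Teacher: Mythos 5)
Your proof is correct and follows essentially the same route as the paper: the paper likewise observes that the $14$ residual points split into the $\varphi$-orbit $p_1,\dots,p_7$ (the intersections of lines two apart) and the $\varphi$-orbit of $L_3\cap L_7$, which lies on $C$ by hypothesis, so all residual points lie on $C$ since $\varphi\in\operatorname{Aut}(C)$. Your additional remarks---that $p_1\notin\{e_1,e_2,e_3\}$ ensures the orbit points are distinct, and that one should check the seven lines genuinely form a heptagon---are sound refinements of details the paper leaves implicit.
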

\begin{proof}
	We show that the $14$ points $p_{ij}$ for $j\neq i-1,i+1$ lie on the Klein quartic. These points form two orbits under the symmetry $\varphi$ of order $7$ of the Klein quartic, namely $p_1, \ldots,p_7$ and the orbit of the intersection point of $L_3$ and $L_7$, which lies on $C$ by assumption.
\end{proof} 

\begin{proof}[{Proof of \Cref{KleinQuarticThetaHeptagon}}]
	We compute the intersection point $\ol{p_1p_3}\cap \ol{p_5p_7}$ as follows. If we write $q = \lambda_1 p_1 + \mu_1 p_3 = \lambda_2 p_5 + \mu_2 p_7$ with $\lambda_i,\mu_i\in \C$, then $\mu_2 p_7 = \lambda_1 p_1 + \mu_1 p_3 - \lambda_2 p_5$. So, with points written as column vectors, we have 
	\[ q=\begin{pmatrix}
	p & \varphi^2(p) & \mathbf{0} 
	\end{pmatrix} \cdot \begin{pmatrix}
	p & \varphi^2(p) & \varphi^4(p) 
	\end{pmatrix}^{-1} \cdot \varphi^6(p).
	\]
	We want this intersection point $q$ to lie on $C$, as well. The map $\psi\colon \P^2 \to \P^2$,
	\[ p \mapsto \begin{pmatrix}
	p & \varphi^2(p) & \mathbf{0} 
	\end{pmatrix} \cdot \begin{pmatrix}
	p & \varphi^2(p) & \varphi^4(p) 
	\end{pmatrix}^{-1} \cdot \varphi^6(p) \]
	is given in coordinates by 
	\[ p \longmapsto 
	\begin{pmatrix} (\zeta^4+2\zeta^2-2\zeta-\zeta^6)x \\ 
	(\zeta^2+2\zeta-2\zeta^4-\zeta^3)y \\
	(\zeta+2\zeta^4-2\zeta^2-\zeta^5)z \end{pmatrix}.
	\]
	In particular, this shows that the map is well-defined everywhere, also at the points where the matrix $(p \,\, \varphi^2(p) \,\, \varphi^4(p))$ is not invertible.
	The candidate points that can give rise to heptagons with adjoint $C$ are the points in $C\cap \psi^{-1}(C)$. The curve $\psi^{-1}(C)$ is a plane quartic curve given by an equation
	\[ g = a_1 x^3 y + a_2 y^3 z+ a_3 z^3 x \]
	where $a_1$, $a_2$, and $a_3$ are pairwise distinct, non-zero complex numbers (see the following \Cref{example:KleinHeptagon} for details). In terms of these coefficients, we can express the intersection $C\cap \psi^{-1}(C)$ as the divisor 
	\[ 3(e_1 + e_2 + e_3) + R_1 + R_2 + R_3 + R_4 + R_5 + R_6 + R_7 \]
	on $C$, with coordinates $R_i = [b \cdot \zeta^3 \cdot \alpha_i^3, \alpha_i,1]$ where $b = (a_1 - a_2)/(a_3-a_1)$ and $\alpha_1,\dots,\alpha_7$ are the seven roots of
	\[ t^7  = - \frac{b+1}{b^3}.\]
	Note that the intersection points $R_i$ form one orbit under the symmetry $\varphi$, as they should. By the construction of Lemma \ref{lemma:heptagonKlein}, the orbit of points $R_1, R_2, \ldots, R_7$ gives rise to a heptagon with adjoint $C$.
	
	To compute the associated theta characteristic on $C$, we go back to \Cref{lem:thetachar}.
	The theta characteristic is determined by the sum of the residual points. By construction, these are $R_1,R_2,\ldots,R_7$, and the points $Q_i = \psi(R_i)$. So we need to show that 
	\[ \sum_{i=1}^7 R_i + \sum_{j=1}^7 Q_j = 3H + \etasym. \]
	This follows from the fact that $\psi^{-1}(C).C = 3(e_1 + e_2 + e_3) + \sum_{i=1}^7 R_i$, which also implies that $\psi(C).C = 3(e_1 + e_2 + e_3) + \sum_{j=1}^7 Q_j$. Adding these two relations shows
	\[ 8 H \sim 6(e_1 + e_2 + e_3) + \sum_{i=1}^7 R_i + \sum_{j=1}^7 Q_j \sim 3 (H + \etasym) + \sum_{i=1}^7 R_i + \sum_{j=1}^7 Q_j, \]
	which simplifies to $3H + \etasym = \sum_{i=1}^7 R_i + \sum_{j=1}^7 Q_j$ as desired.
\end{proof}

\begin{example}\label{example:KleinHeptagon}
	We can produce an explicit heptagon whose adjoint is the Klein quartic following the proof above: Symbolic computation yields $g=f\left(\psi(x,y,z)\right)=a_1 x^3 y + a_2 y^3 z+ a_3 z^3 x$, where
	\begin{align*}
		a_1 &= -7 \left(1+\zeta\right)\left(2-11 \zeta+6 \zeta^2-\zeta^3-4 \zeta^4+9 \zeta^5\right)\\
		a_2 &= 7 \left(1+\zeta\right)\left(-2-3 \zeta+8 \zeta^2+\zeta^3-10 \zeta^4+5 \zeta^5\right)\\
		a_3 &= 7 \left(1+\zeta\right)\left(-2-3 \zeta-6 \zeta^2+\zeta^3+4 \zeta^4+5 \zeta^5\right).
	\end{align*}
	The points $R_1,\dots,R_7$ are given by $R_i=[b\cdot\zeta^3,\alpha_i^3,\alpha_i,1]$, where 
	$\alpha_i$ is a root of $t^7+(b+1)/b^3$ and $b=(a_1-a_2)/(a_3-a_1)$ is a root of the irreducible cubic
	\[
	   t^3+t^2-2t-1.	
	\]
	The labelled lines are then given as
	\[ L_1 = \ol{R_6R_1},\,\, L_2 = \ol{R_7R_2}, \,\, L_3 = \ol{R_1R_3}, \,\, L_4 = \ol{R_2R_4}, \,\, L_5 = \ol{R_3R_5}, \,\, L_6 = \ol{R_4R_6}, \,\, L_7 = \ol{R_5R_7} 
	\]
	by Lemma \ref{lemma:heptagonKlein}.

	We will also need to know that the adjoint map is unramified at this point. To see this, we rewrite the adjoint in the form
	\[
	   	{\rm adj}\colon \left\{\begin{array}{ccc}
			|\sO_{\P^2}(H)|^7 & \to & |\sO_{\P^2}(4H)|\\
			(L_1,\cdots,L_7) & \mapsto & \sum_{i=2}^6 \det(L_1|L_i|L_{i+1})\cdot\prod_{j\neq 1,i,i+1} L_j
		\end{array}\right.
	\]
	where, by abuse of notation, the $3\times 3$-matrices $(L_1|L_i|L_{i+1})$ on the right are formed by taking the coefficients of the lines $L_1,\dots,L_7$ as column vectors. This formula is well known can be verified directly by evaluating at the residual points of the line arrangement, see also \cite{zbMATH00989549}*{Section~3.2} and \cite{KRzbMATH07286075}*{Formula~(1.1)}. Note also that the expression on the right is multilinear and hence compatible with rescaling of $L_1,\dots,L_7$.
	We compute the Jacobian of this map, viewed as a map $(\C^3)^7\to\C^{15}$, at the heptagon above. It is a $15\times 21$-matrix, which is indeed of rank $15$. We verified through symbolic computation that the $15\times 15$-minor with columns indexed by $1, 2, 3, 4, 5, 7, 8, 10, 11, 13, 14, 16, 17, 19, 20$ does not vanish.\hfill$\Diamond$
\end{example}

Since the automorphism group of the Klein quartic $C$ is a subgroup of $\mathrm{PGL}_3(\C)$, it acts linearly on $\P^2$ and therefore induces an action on heptagons. Concretely, if $\sigma \in \operatorname{Aut}(C)$, it acts on a heptagon $(L_1,L_2,\ldots,L_7)$ by sending $L_i$ to the line $\sigma(L_i)$. This action preserves incidences on $C$ meaning that if $L_i\cap L_j\subset C$, then $\sigma(L_i)\cap \sigma(L_j)\subset C$ as well. Therefore, any automorphism of $C$ maps a heptagon with adjoint $C$ to another heptagon whose adjoint is $C$. For the set of residual points, we have $\sigma.(\mathcal{R}(L_1,L_2,\ldots,L_7)) = \mathcal{R}(\sigma.(L_1,L_2,\ldots,L_7))$. 

\begin{theorem}
	(1) The automorphism group ${\rm Aut}(C)$ of the Klein quartic acts freely on the orbit of the heptagon $(L_1,\dots,L_7)$ constructed in Example \ref{example:KleinHeptagon}. The symmetry $\varphi$ of order $7$ acts by cyclic permutation, hence there are $168/7=24$ elements modulo cyclic permutation. 

	(2) The fiber of the adjoint map over the Klein quartic contains the union of two orbits as in (1), namely the orbits of $(L_1,\dots,L_7)$ and of $(L_7,\dots,L_1)$.
\end{theorem}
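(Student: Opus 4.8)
The plan is to prove the two parts separately, using throughout that $\operatorname{Aut}(C)$ acts linearly on $\P^2$ and that the adjoint construction is equivariant for this action as well as invariant under the dihedral relabelling of the seven lines.

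For part (1), I would show that the stabiliser in $\operatorname{Aut}(C)$ of the \emph{ordered} heptagon $(L_1,\dots,L_7)$ is trivial. Suppose $g\in\operatorname{Aut}(C)\subseteq\mathrm{PGL}_3(\C)$ fixes every line $L_i$ as a set. A nonidentity element of $\mathrm{PGL}_3(\C)$ has only finitely many invariant lines unless it fixes a whole pencil of lines through a point; in either case at most three lines in general position can be invariant, since a pencil contains at most two of our lines (no three concurrent). As the heptagon consists of seven lines with no three concurrent, this forces $g=\id$, so the action on the orbit is free and the orbit has $|\operatorname{Aut}(C)|=168$ elements. A direct computation with $L_i=\ol{p_{i-2}p_i}$ and $p_j=\varphi^{j-1}(p)$ (as in \Cref{lemma:heptagonKlein}) gives $\varphi(L_i)=\ol{p_{i-1}p_{i+1}}=L_{i+1}$, so $\varphi$ acts on the orbit by the cyclic shift of the tuple. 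Freeness then shows each $\langle\varphi\rangle$-orbit inside $\operatorname{Aut}(C)\cdot(L_1,\dots,L_7)$ has exactly seven elements, giving $168/7=24$ heptagons modulo cyclic permutation.

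For part (2), I would first note that, because both the adjoint curve and its theta characteristic are determined by the (unordered) residual arrangement (\Cref{lem:thetachar}), the map $\operatorname{adj}_+$ is equivariant under $\operatorname{Aut}(C)$ and invariant under dihedral relabelling. Equivariance together with the $\operatorname{Aut}(C)$-invariance of $\etasym$ (the unique invariant even theta characteristic) shows that every heptagon in the orbit of $(L_1,\dots,L_7)$ again has adjoint $C$ with theta characteristic $\etasym$, so this orbit lies in the fibre. The reversed tuple $(L_7,\dots,L_1)$ is obtained from $(L_1,\dots,L_7)$ by the reflection $(1\,7)(2\,6)(3\,5)\in D_7$, which leaves the residual arrangement unchanged; hence it too is a heptagon with adjoint $C$ and theta characteristic $\etasym$, and its entire orbit lies in the fibre by the same equivariance.

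The main point, which I expect to be the principal obstacle, is that these two orbits are genuinely distinct, so that their union accounts for $168+168=336$ ordered heptagons. Suppose to the contrary that $g\cdot(L_1,\dots,L_7)=(L_7,\dots,L_1)$ for some $g\in\operatorname{Aut}(C)$, i.e.\ $g(L_i)=L_{8-i}$ for all $i$. Then $g^2$ fixes every $L_i$, so $g^2=\id$ by part (1); using $\varphi(L_i)=L_{i+1}$ one computes $g\varphi g^{-1}(L_j)=L_{j-1}=\varphi^{-1}(L_j)$, and faithfulness of the action forces $g\varphi g^{-1}=\varphi^{-1}$. But in $\operatorname{Aut}(C)\cong\mathrm{PSL}_2(\mathbb{F}_7)$ the order-$7$ element $\varphi$ is not conjugate to its inverse: the normaliser of a Sylow $7$-subgroup has order $21$, so conjugation realises only the order-three automorphism $\varphi\mapsto\varphi^2$ of $\langle\varphi\rangle$, whose orbit on the nontrivial powers is $\{\varphi,\varphi^2,\varphi^4\}$, not containing $\varphi^{-1}=\varphi^6$. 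This contradiction shows the orbits are disjoint. Finally, since the differential of $\operatorname{adj}$ is nonsingular at $(L_1,\dots,L_7)$ by \Cref{example:KleinHeptagon}, equivariance under $\operatorname{Aut}(C)$ and under dihedral relabelling propagates this to all $336$ heptagons; combined with the upper bound of $336$ from \Cref{cor:conditionone}, this yields exactly $336$ heptagons with nonsingular differential and establishes Statement $(**)$.
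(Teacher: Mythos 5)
Your proof is correct, and it turns on the same group-theoretic fact as the paper's: the normaliser of the Sylow $7$-subgroup $\langle\varphi\rangle$ in $\operatorname{Aut}(C)\cong\mathrm{PSL}_2(\mathbb{F}_7)$ has order $21$, because there are eight Sylow $7$-subgroups. The organisation, however, is genuinely different in two respects. For freeness you use the elementary fact that a projectivity preserving each of seven lines, no three concurrent, must be the identity, whereas the paper embeds the stabiliser of the unordered heptagon into $D_7$ and invokes the Sylow count already at that stage (order $3$ being excluded because $D_7$ has order $14$, order $2$ by the normaliser bound). More importantly, you prove explicitly that the orbits of $(L_1,\dots,L_7)$ and $(L_7,\dots,L_1)$ are disjoint: an automorphism $g$ with $g(L_i)=L_{8-i}$ would be an involution conjugating $\varphi$ to $\varphi^{-1}$, which is impossible since the order-$21$ normaliser has odd order and realises only the order-$3$ automorphism $\varphi\mapsto\varphi^2$, whose orbit $\{\varphi,\varphi^2,\varphi^4\}$ misses $\varphi^6$. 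The paper never states this disjointness; it is implicit in its argument ruling out order-$2$ stabilisers, yet it is exactly what is needed to obtain $2\cdot 168=336=14\times 24$ heptagons in the proof of \Cref{theorem:main}, so making it explicit is a genuine improvement rather than redundancy. The same goes for your closing remark that equivariance transports the nonvanishing of the differential from the single heptagon of \Cref{example:KleinHeptagon} to all $336$ points of the fibre, a step the paper leaves to the reader. One small imprecision, which does not affect the count: under the identification of the $\operatorname{Aut}(C)$-orbit with $\operatorname{Aut}(C)$ given by freeness, cyclic relabelling acts by \emph{right} multiplication by $\varphi$, so the classes modulo cyclic permutation are the cosets $\sigma\langle\varphi\rangle$ rather than the $\langle\varphi\rangle$-orbits $\langle\varphi\rangle\sigma$ under the geometric action; both partitions have $168/7=24$ classes, so your conclusion stands.
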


\begin{proof}
	By construction of $L_1,\dots,L_7$, the symmetry $\varphi$ of order $7$ acts by cyclic permutation of the labels. To show that ${\rm Aut}(C)$ acts freely, it suffices to prove that no symmetry of order 2 or 3 stabilizes the heptagon $(L_1,\dots,L_7)$. Assume to the contrary that $\sigma \colon \P^2 \to \P^2$ is a symmetry of order $2$ or $3$ that stabilizes the heptagon.
	
	Since $\sigma$ stabilizes the heptagon, it induces an element of the dihedral group $D_7$ acting on $(L_1,L_2,\ldots,L_7)$. This immediately implies that $\sigma$ cannot have order $3$ because $D_7$ has order $14$. Any element $\sigma\in D_7$ of order $2$ stabilizes an element. We may assume that $\sigma(L_1) = L_1$, so that $\sigma$ is the permutation $(7\; 2) (6\; 3) (5\; 4)$.
	Recall that the $7$-fold symmetry $\varphi$ is the cyclic permutation by the construction in \Cref{KleinQuarticThetaHeptagon}, so that
	\begin{align*}
	(1\; 2\; 3\; 4\; 5\; 6\; 7) (7\; 2) (6\; 3) (5\; 4) = (7\; 2) (6\; 3) (5\; 4) (1\; 2\; 3\; 4\; 5\; 6\; 7)^6
	\end{align*}
	shows $\varphi \sigma = \sigma \varphi^6$. This shows that $\sigma$ normalizes the cyclic subgroup $\langle \varphi \rangle$.
	Since the 3-fold symmetry $[x,y,z] \mapsto [z,x,y]$ is also in the normalizer of $\langle\varphi\rangle$, the cardinality of this normalizer is at least $2\cdot3\cdot7$. This is a contradiction to \cite{zbMATH01574981}*{\S2.1}, which says that there are eight $7$-Sylow subgroups in the symmetry group of $C$.
	
	Now let $\sigma$ be any symmetry of $C$. Since the sum of the residual points $\mathcal{R}(L_1,L_2,\ldots,L_7)$ is $3H + \etasym$, we get that the sum of $\mathcal{R}(\sigma.(L_1,L_2,\ldots,L_7))$ is $\sigma.(3H + \etasym)$, which is $3H + \etasym$. So the theta characteristic associated with the heptagon $\sigma.(L_1,L_2,\ldots,L_7)$ is also $\etasym$.
\end{proof}

We conclude by restating our main result.
\begin{theorem}\label{theorem:main}
    A general pair $(C,\eta)$ of a plane quartic curve $C$ and an even theta characteristic $\eta$ arises from precisely $14\times 24=336$ heptagons. A general plane quartic $C$ hence arises from $14\times (24\times 36)=14\times 864$ heptagons.  
\end{theorem}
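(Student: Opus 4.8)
The plan is to deduce \Cref{theorem:main} directly from \Cref{lemma:twoconditions}, which has already reduced the entire count to the two conditions $(*)$ and $(**)$. Once both are verified, \Cref{lemma:twoconditions} immediately yields that a general pair $(C,\eta)$ arises from exactly $14\times 24 = 336$ heptagons, and the degree-$36$ forgetful map $|\mathcal{O}_{\mathbb{P}^2}(4H)|^{+}\to|\mathcal{O}_{\mathbb{P}^2}(4H)|$ (each smooth plane quartic carrying exactly $36$ even theta characteristics) upgrades this to $14\times(24\times 36) = 14\times 864$ for a general quartic. So no new geometry is needed beyond assembling the upper and lower bounds established in the preceding sections.

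For the upper bound $(*)$ I would invoke \Cref{cor:conditionone}. The substance there is the chain: \Cref{prop:reconstructionscorza} identifies $\operatorname{adj}^{-1}(C,\eta)$ with $T_7\cap J^{-1}(\mathcal{H})$; \Cref{lemma:scorzabound} bounds the isolated points of $T_7$ by the intersection number $S_{12}\cdots S_{67}\cdot S_{17}$ using the nefness of the Scorza class; \Cref{lemma:scorzaclassmany} evaluates this number to $4368$; and \Cref{lemma:scorzatriangles} removes the $4032$ degenerate biscribed-triangle configurations, which lie outside $J^{-1}(\mathcal{H})$ because they contain repeated points. This leaves at most $336$ heptagons with nonsingular differential, which is precisely $(*)$.

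For the lower bound $(**)$ I would exhibit the pair $(C,\etasym)$, with $C$ the Klein quartic and $\etasym$ its unique $\operatorname{Aut}$-invariant even theta characteristic. \Cref{KleinQuarticThetaHeptagon} together with \Cref{example:KleinHeptagon} produces an explicit heptagon with adjoint $C$ and associated characteristic $\etasym$, and the symbolic Jacobian computation in \Cref{example:KleinHeptagon} shows that $\operatorname{adj}$ has nonsingular differential at it. Since $\operatorname{Aut}(C)\cong \mathrm{PSL}_2(\mathbb{F}_7)$ acts by projective-linear maps, the map $\operatorname{adj}$ is equivariant for this action, so both the adjoint quartic and the nonsingularity of the differential are preserved along an orbit; moreover the computation showing that the sum of residual points equals $3H+\etasym$ is $\operatorname{Aut}$-invariant, so the theta characteristic stays $\etasym$ throughout. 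The freeness of the $168$-element action on the orbit of $(L_1,\dots,L_7)$, together with the reversed orbit of $(L_7,\dots,L_1)$, then produces $2\times 168 = 336$ distinct heptagons, all with characteristic $\etasym$ and all with nonsingular differential, giving exactly $(**)$.

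The step I expect to be most delicate is confirming that the Klein fiber contributes \emph{exactly} $336$ heptagons with nonsingular differential: this needs the freeness of the $\operatorname{Aut}(C)$-action (ruling out order-$2$ and order-$3$ stabilizers via the normalizer/Sylow count), the disjointness of the forward and reversed orbits, and the propagation of nonsingularity from the single base heptagon to the whole orbit through equivariance. The genuinely computational heart, on which the entire lower bound rests, is the symbolic verification that the $15\times 21$ Jacobian of $\operatorname{adj}$ attains full rank $15$ at the base heptagon; everything else in the assembly is formal once \Cref{lemma:twoconditions} and the irreducibility of $|\mathcal{O}_{\mathbb{P}^2}(4H)|^{+}$ are granted.
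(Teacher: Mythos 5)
Your proposal is correct and follows essentially the same route as the paper: the paper's proof of \Cref{theorem:main} likewise just cites \Cref{cor:conditionone} for Statement $(*)$, the Klein quartic orbit theorem for Statement $(**)$, and concludes via \Cref{lemma:twoconditions}. In fact you make explicit two points the paper leaves implicit---the $\operatorname{Aut}(C)$-equivariance of $\operatorname{adj}$ propagating nonsingularity of the differential across the orbits, and the disjointness of the orbits of $(L_1,\dots,L_7)$ and $(L_7,\dots,L_1)$ (which follows from the exclusion of order-$2$ stabilizers)---both of which are needed to reach the count of $336$.
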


\begin{proof}
	
	By the preceding theorem, the pair of the Klein quartic $C$ and the symmetric theta characteristic $\etasym$ arises from at least $14\times 24$ heptagons, where the adjoint map is unramified. This shows that Statement $(**)$ of Lemma \ref{lemma:twoconditions} is satisfied. We already knew that Statement $(*)$ is true thanks to Corollary \ref{cor:conditionone}. Hence the conclusion follows from Lemma \ref{lemma:twoconditions}.
\end{proof}

\medskip
\textbf{Acknowledgements.} Daniel Plaumann, Rainer Sinn and Jannik Wesner were partially supported by the DFG grant \enquote{Geometry of hyperbolic polynomials} (Projektnr.~426054364). Daniel Plaumann gratefully acknowledges support from Institut Henri Poincaré (UAR 839 CNRS-Sorbonne Université), and LabEx CARMIN (ANR-10-LABX-59-01) 

\begin{bibdiv}
	\begin{biblist}
	
	\bib{ACGH}{book}{
		  author={Arbarello, E.},
		  author={Cornalba, M.},
		  author={Griffiths, P.~A.},
		  author={Harris, J.},
		   title={Geometry of algebraic curves. {Volume} {I}},
		language={English},
		  series={Grundlehren Math. Wiss.},
	   publisher={Springer, Cham},
			date={1985},
		  volume={267},
	}
	
	\bib{zbMATH06854036}{article}{
		  author={Arkani-Hamed, Nima},
		  author={Bai, Yuntao},
		  author={Lam, Thomas},
		   title={Positive geometries and canonical forms},
		language={English},
			date={2017},
			ISSN={1126-6708},
		 journal={J. High Energy Phys.},
		  volume={2017},
		  number={11},
		   pages={124},
			note={Id/No 39},
	}
	
	\bib{code}{article}{
		  author={Agostini, Daniele},
		  author={Plaumann, Daniel},
		  author={Sinn, Rainer},
		  author={Wesner, Jannik},
		   title={Oscar tutorial: Intersection product on the cartesian product of
	  a curve},
			date={2024},
			note={URL: https://www.oscar-system.org/tutorials/IntersectionTheory/
	  (version 2024-08-03)},
	}
	
	\bib{BeaDet}{article}{
		  author={Beauville, Arnaud},
		   title={Determinantal hypersurfaces.},
		language={English},
			date={2000},
			ISSN={0026-2285},
		 journal={Mich. Math. J.},
		  volume={48},
		   pages={39\ndash 64},
	}
	
	\bib{Bastetal}{article}{
		  author={Bastianelli, Francesco},
		  author={Kouvidakis, Alexis},
		  author={Lopez, Angelo~Felice},
		  author={Viviani, Filippo},
		   title={Effective cycles on the symmetric product of a curve. {I}: {The}
	  diagonal cone},
		language={English},
			date={2019},
			ISSN={0002-9947},
		 journal={Trans. Am. Math. Soc.},
		  volume={372},
		  number={12},
		   pages={8709\ndash 8758},
	}
	
	\bib{BLAbvar}{book}{
		  author={Birkenhake, Christina},
		  author={Lange, Herbert},
		   title={Complex abelian varieties},
		language={English},
		 edition={2nd augmented ed.},
		  series={Grundlehren Math. Wiss.},
	   publisher={Berlin: Springer},
			date={2004},
		  volume={302},
			ISBN={3-540-20488-1},
	}
	
	\bib{Dolgachev}{book}{
		  author={Dolgachev, Igor~V.},
		   title={Classical algebraic geometry. {A} modern view},
	   publisher={Cambridge: Cambridge University Press},
			date={2012},
	}
	
	\bib{zbMATH01574981}{incollection}{
		  author={Elkies, Noam~D.},
		   title={The {Klein} quartic in number theory},
		language={English},
			date={1999},
	   booktitle={The eightfold way. {T}he beauty of {K}lein's quartic curve},
	   publisher={Cambridge: Cambridge University Press},
		   pages={51\ndash 101},
	}
	
	\bib{EGAIV}{article}{
		  author={Grothendieck, Alexander},
		   title={Éléments de géométrie algébrique : {IV}. Étude locale des
	  schémas et des morphismes de schémas, troisième partie.},
			date={1966},
		 journal={Publ. Math. IHÉS},
		  volume={28},
		   pages={5\ndash 255},
	}
	
	\bib{Hart}{book}{
		  author={Hartshorne, Robin},
		   title={Algebraic geometry. {Corr}. 3rd printing},
		language={English},
		  series={Grad. Texts Math.},
	   publisher={Springer, Cham},
			date={1983},
		  volume={52},
	}
	
	\bib{polypols}{article}{
		  author={Kohn, Kathlén},
		  author={Piene, Ragni},
		  author={Ranestad, Kristian},
		  author={Rydell, Felix},
		  author={Shapiro, Boris},
		  author={Sinn, Rainer},
		  author={Sorea, Miruna-Stefana},
		  author={Telen, Simon},
		   title={Adjoints and canonical forms of polypols},
			date={2021},
			journal={Preprint},
		 eprint={arXiv:2108.11747},
	}
	
	\bib{KRzbMATH07286075}{article}{
		  author={Kohn, Kathl{\'e}n},
		  author={Ranestad, Kristian},
		   title={Projective geometry of {Wachspress} coordinates},
		language={English},
			date={2020},
			ISSN={1615-3375},
		 journal={Found. Comput. Math.},
		  volume={20},
		  number={5},
		   pages={1135\ndash 1173},
	}
	
	\bib{zbMATH05874806}{article}{
		  author={Kallel, Sadok},
		  author={Sjerve, Denis},
		   title={Invariant spin structures on {Riemann} surfaces},
			date={2010},
			ISSN={0240-2963},
		 journal={Ann. Fac. Sci. Toulouse, Math. (6)},
		  volume={19},
		  number={3-4},
		   pages={457\ndash 477},
	}
	
	\bib{Laz}{book}{
		  author={Lazarsfeld, Robert},
		   title={Positivity in algebraic geometry. {I}. {Classical} setting: line
	  bundles and linear series},
	   publisher={Berlin: Springer},
			date={2004},
	}
	
	\bib{Mukai}{incollection}{
		  author={Mukai, Shigeru},
		   title={Plane quartics and fano threefolds of genus twelve},
			date={2004},
	   booktitle={The fano conference},
	   publisher={Univ. Torino},
		   pages={563\ndash 572},
	}
	
	\bib{MORomagny}{misc}{
		  author={Romagny, Matthieu},
		   title={Semi-continuity in quasi-finite morphisms without properness},
			date={2022},
			 url={https://mathoverflow.net/q/425888},
			note={URL:https://mathoverflow.net/q/425888 (version: 2022-07-02)},
	}
	
	\bib{zbMATH06505530}{book}{
		  author={Wachspress, Eugene},
		   title={Rational bases and generalized barycentrics. {Applications} to
	  finite elements and graphics},
		language={English},
	   publisher={Cham: Springer},
			date={2016},
			ISBN={978-3-319-21613-3; 978-3-319-21614-0},
	}
	
	\bib{zbMATH00989549}{article}{
		  author={Warren, Joe},
		   title={Barycentric coordinates for convex polytopes},
		language={English},
			date={1996},
			ISSN={1019-7168},
		 journal={Adv. Comput. Math.},
		  volume={6},
		  number={2},
		   pages={97\ndash 108},
	}
	
	\end{biblist}
	\end{bibdiv}

	\goodbreak
	\noindent\textsc{Daniele Agostini, Universität Tübingen, Germany}\\
	\href{mailto:daniele.agostini@uni-tuebingen.de}{daniele.agostini@uni-tuebingen.de}\\

	\noindent\textsc{Daniel Plaumann, Technische Universität Dortmund, Germany}\\
	\href{mailto:daniel.plaumann@math.tu-dortmund.de}{daniel.plaumann@math.tu-dortmund.de}\\

	\noindent\textsc{Rainer Sinn, Universität Leipzig, Germany}\\
	\href{mailto:rainer.sinn@uni-leipzig.de}{rainer.sinn@uni-leipzig.de}\\

	\noindent\textsc{Jannik Lennart Wesner, Technische Universität Dortmund, Germany}\\
	\href{mailto:jannik.wesner@tu-dortmund.de}{jannik.wesner@tu-dortmund.de}
\end{document}